\definecolor{refkey}{gray}{.45}
\definecolor{labelkey}{gray}{.45}
\definecolor{grey}{rgb}{.7,.7,.7}
\newtheorem{theorem}{Theorem}[section]
\newtheorem{proposition}[theorem]{Proposition}
\newtheorem{lemma}[theorem]{Lemma}
\theoremstyle{remark}
\newtheorem{remark}[theorem]{Remark}
\theoremstyle{definition}
\newtheorem{example}[theorem]{Example}
\def\H{\mathcal H}
\def\R{\mathbb R}
\def\e{\varepsilon}
\def\S{\Sigma}
\def\vphi{\varphi}
\def\Div{{\rm div}\,}
\def\om{\omega}
\def\l{\lambda}
\def\g{\gamma}
\def\pa{\partial}
\def\E{\mathcal{E}}
\def\F{\mathcal{P}_f}
\def\Hi{\mathcal{H}}
\renewcommand{\>}{\rangle}
\renewcommand{\Div}{{\rm div \,}}
\newcommand{\cc}{\subset\!\subset}
\newcommand{\bary}{{\rm bar}}
\newcommand{\Rn}{{\mathbb{R}^n}}
\def\H{\mathcal H}
\def\l{{\lambda}}
\newcommand{\vertiii}[1]{{\left\vert\kern-0.25ex\left\vert\kern-0.25ex\left\vert #1
    \right\vert\kern-0.25ex\right\vert\kern-0.25ex\right\vert}}
\def\S{\mathbb{S}}
\newcommand{\DIVV}{{\rm div}}
\newcommand{\na}{{\nabla}}
\def\>{{\rangle}}
\newcommand{\ba}{\begin{array}}
\newcommand{\ea}{\end{array}}
\newcommand{\tld}[1]{\widetilde{#1}}
\newcommand{\bthm}{\begin{theorem}}
\newcommand{\ethm}{\end{theorem}}
\newcommand{\bprop}{\begin{proposition}}
\newcommand{\eprop}{\end{proposition}}
\newcommand{\blemma}{\begin{lemma}}
\newcommand{\elemma}{\end{lemma}}
\newcommand{\bexmpl}{\begin{example}}
\newcommand{\eexmpl}{\end{example}}
\newcommand{\beqn}{\begin{equation}}
\newcommand{\eeqn}{\end{equation}}
\newcommand{\beqns}{\begin{equation*}}
\newcommand{\eeqns}{\end{equation*}}
\newcommand{\pt}{\partial}
\newcommand{\Hn}{\mathcal{H}^{n-1}}
\newcommand{\V}{\mathcal{V}}
\renewcommand{\leq}{\leqslant}
\renewcommand{\geq}{\geqslant}
\definecolor{mygreen}{rgb}{0.1,0.75,0.2}
\newcommand{\eps}{\epsilon}
\newcommand{\varE}{\mathcal{E}}
\newcommand{\Case}{\it{Case }}
\newcommand{\Per}{\mathcal{P}}
\newcounter{myenumi}
\newenvironment{myenumerate}{
\begin{enumerate} 
\setcounter{enumi}{\value{myenumi}}
}{
\setcounter{myenumi}{\value{enumi}}
\end{enumerate}
}
\DeclareMathOperator{\id}{Id}
\DeclareMathOperator{\diam}{diam}
\numberwithin{equation}{section}
\title[Anisotropic liquid drop models]{Anisotropic liquid drop models}
\author{Rustum Choksi}
\address{Department of Mathematics and Statistics,
				McGill University, Montr\'{e}al, QC}
\email{rustum.choksi@mcgill.ca}
\author{Robin Neumayer}
\address{School of Mathematics,
				Institute for Advanced Study, Princeton, NJ}
\email{neumayer@ias.edu}
\author{Ihsan Topaloglu}
\address{Department of Mathematics and Applied Mathematics,
				Virginia Commonwealth University, Richmond, VA}
\email{iatopaloglu@vcu.edu}
\date{\today}                                        
\subjclass{35Q40, 35Q70, 49Q20, 49S05, 82D10}
\keywords{liquid drop model, anisotropic, Wulff shape, quasi-minimizers of anisotropic perimeter}                                           
\begin{document}

\begin{abstract} We introduce and study certain  variants of Gamow's liquid drop model  
in which an anisotropic surface energy replaces the perimeter. After existence and nonexistence results are established, the shape of minimizers is analyzed. Under suitable regularity and ellipticity assumptions on the surface tension, Wulff shapes are minimizers in this problem if and only if the surface energy is isotropic. In sharp contrast,  Wulff shapes are the unique minimizers for certain crystalline surface tensions. We also introduce and study several  related liquid drop models with anisotropic repulsion for which the 
Wulff shape is the minimizer in the small mass regime. 
\end{abstract}

\maketitle

\baselineskip=13pt

%%%%%%%%%%%%%%%%%%%%%%%%%%%%%%%%%%%%%%%%%%%%%%%%%%%%%%%%%%%%%%%%%
%%%%%%%%%%%%%%%%%%%%%%%%%%%%%%%%%%%%%%%%%%%%%%%%%%%%%%%%%%%%%%%%%
\section{Introduction}\label{sec:intro}
%%%%%%%%%%%%%%%%%%%%%%%%%%%%%%%%%%%%%%%%%%%%%%%%%%%%%%%%%%%%%%%%%
%%%%%%%%%%%%%%%%%%%%%%%%%%%%%%%%%%%%%%%%%%%%%%%%%%%%%%%%%%%%%%%%%

Gamow's liquid drop (LD) model, early versions of which date back to 1930 (\cite{Ga1930}),  has recently generated considerable interest in the calculus of variations community (see \cite{ChMuTo2017} for a general introduction). It was initially developed to predict the mass defect curve and the shape of atomic nuclei.   In its  modern rendition,  it includes two competing forces: 
an  attractive surface energy associated with a depletion of nucleon
density near the nucleus boundary,  and  repulsive Coulombic interactions due to the presence of
positively charged protons. 
Mathematically,  it has a very simple form: over all sets $E\subset \R^3$ of measure $m$, minimize 
\[ \Per (E) \, + \, \int_{E}\!\int_E \frac{dx dy}{|x-y|}, \]
 where $\Per (E)$ denotes the perimeter in the geometric measure-theoretic sense.
 As such, the LD model is a paradigm for shape optimization via competitions of short and long-range interactions, and indeed it (or variants of it)  has been used to   model many different systems at all length scales,  from atomic (its original conception) to cosmological. 

Often studied is the generalization of the LD model in which one works in $n$ space dimensions with 
any Riesz potential; that is, for fixed $\alpha \in (0,n)$  we consider the variational problem 
\begin{equation}\label{eqn: iso drop} \quad 
\inf \, \Big\{\mathcal{E}(E):=  \Per (E)+ \V(E) \, \big\vert  \, |E| =m \Big\} \quad {\rm where} \quad 
\V(E) \, := \, \int_{E}\!\int_E \frac{dx dy}{|x-y|^\alpha}.
\end{equation}
 From a mathematical point of view, the LD model has two notable  features:  
 
 \vspace{0.1cm}
\noindent {\bf Feature (i).} Balls are extremal for each individual term but at opposite ends of the spectrum --  
 balls are best for (minimizers of) the first term but worst for (maximizers of) the second  term. In particular,  a ball of mass $m$ is always a critical point of $\mathcal{E}$ among volume-preserving variations. \\
{\bf Feature (ii).} The two terms scale differently in mass $m$, with  perimeter dominating  for small mass and repulsion dominating for large mass. 
 
 \vspace{0.1cm}
In \cite{ChPe2011}, it is conjectured that up to a critical mass $m_c$, balls are the unique minimizers,  while above $m_c$,  minimizers fail to exists. 
Due primarily to the work of Kn\"{u}pfer and Muratov (\cite{KM13,KM14}) and Figalli et al. (\cite{FFMMM}), with additional/related  contributions  from \cite{BoCr14, ChPe2010, FrLi2015,Julin14,LO14}, the state of the art for (global) minimizers of \eqref{eqn: iso drop} is as follows:  For any $n \geq 2$,  we have: 
\begin{enumerate}[(G1)] 
%\addtolength{\itemsep}{8pt}
	\item for all $\alpha \in (0,n)$ there exists $\tld{m}_1>0$ such that if $m\leq \tld{m}_1$, then the problem admits a minimizer; \label{item:G1}
	\vspace{0.1cm}
	\item for all $\alpha \in (0,n)$ there exists $\tld{m}_0>0$, $\tld{m}_0\leq \tld{m}_1$, such that if $m \leq \tld{m}_0$, then the minimizer is uniquely (modulo translations) given by the ball of mass $m$; and  \label{item:G2}
		\vspace{0.1cm}
	\item for all $\alpha \in (0,2)$ there exists $\tld{m}_2>0$ such that if $m>\tld{m}_2$, then no minimizer exists. \label{item:G3}
	\end{enumerate} 
	\vspace{-0.4cm}
It is conjectured in \cite{ChPe2011} that $\tld{m}_0 = \tld{m}_1 =\tld{m}_2$ when $n=3$ and $\alpha= 1$. While the conjecture remains open, it was shown in \cite{BoCr14} that $\tld{m}_0 = \tld{m}_1 =\tld{m}_2$ in any dimension for $\alpha$ sufficiently small. It also remains open whether the nonexistence result (G\ref{item:G3}) can be extended to $\alpha\in [2,n)$. 
  
  \smallskip
 
In this article we introduce and discuss anisotropic variants of (\ref{eqn: iso drop}). In particular, we address 
  two classes of  anisotropic liquid drop models consisting of
\begin{itemize}
\item[(1)] anisotropic perimeter with isotropic long-range repulsions; 
\vspace{0.1cm}
\item[(2)] anisotropic perimeter with  {\it related} anisotropic long-range repulsions. 
\end{itemize}
To our knowledge this is the first mathematical treatment of these problems, particularly surprising since they are both physically and mathematically well-motivated. On the physical side, it is natural to consider 
surface energies which are anisotropic (cf. \cite{Mackie75}). Indeed, at the microscopic level, the existence of a tensor force can produce an asymmetry in the nucleon-nucleon potential, creating an anisotropic surface tension. 
From the more macroscopic perspective, surface diffuseness can vary across the interface boundary, also 
creating an anisotropic surface tension. In such situations, it is natural to couple the anisotropic surface energy with an 
isotropic (e.g. Coulombic) long-range interactions due to the presence of charged particles. Thus we arrive at class (1). On the other hand, atomic lattice structures as seen, for example, in 
Ising spin systems, can have competing anisotropic magnetic interactions (see for example \cite{GM87, GLL11}). 
For example, ferromagnetic Ising models can have anisotropic interactions that are weighted towards one of the principle lattice axes.

Mathematically, it is natural to consider anisotropic LD models because of the richer interaction between the features (i) and (ii).
The mass scaling feature (ii)  prevails and, hence, in Theorem~\ref{thm:exist_nonexist} of Section~\ref{sec:exist_nonexist} we readily establish  existence for small mass and nonexistence for large mass in direct analogy to parts 
(G\ref{item:G1}) and (G\ref{item:G3}) above. Our proof combines several techniques used in the literature in a novel way.
However, feature (i), wherein the ball is naturally replaced by the Wulff shape associated with the anisotropy,   is {\it subtle}: While the Wulff shape is minimal for the perimeter term, its relation to the second term is, in certain cases, unclear. 
Thus what is fundamentally different for these anisotropic LD models is the structure of minimizers for small mass regime.  As we show in this article, this question is rich and, indeed, our work opens up far more questions than it solves.  We now present and discuss our results for each class of models, and  in doing so, explicitly state the main theorems of this article.

\subsection{Class I: Anisotropy in the Surface Energy}

Consider a {\it surface tension} \[ f:\R^n\to[0,\infty)\] to be positively one-homogeneous, convex, and positive on $\R^{n}\setminus\{0\}$. For a set of finite perimeter $E$, we let
\[
\F(E):=\int_{\pa^*E}f(\nu_E)\,d\H^{n-1}\,
\]

be the associated anisotropic surface energy, where 
$\pa^*E$ denotes the reduced boundary, $ \nu_E$ the measure-theoretic outer unit normal, and 
$\H^{n-1}$ the $(n-1)$ dimensional Hausdorff measure. 
Our first class of anisotropic LD models is given by 
\begin{equation}\label{eqn: aniso drop}
\inf\, \Big\{\mathcal{E}_f(E):= \F(E)+ \V(E) \,\,  \big\vert\,    \ |E| =m \Big\}, 
\end{equation}
where $\V$ is defined as in  (\ref{eqn: iso drop}). When $f (\cdot) \, =  | \cdot|$ (the Euclidean norm), our problem (\ref{eqn: aniso drop}) reduces to the LD problem (\ref{eqn: iso drop}). 

Let us recall that the global minimizer of the anisotropic isoperimetric problem 
\[
	 \inf\big\{ \F(E)  \, \big\vert\,   |E| = m\big\}
\] 
is (a dilation or translation of) the {\it Wulff shape} $K$ of $f$ where 
\begin{equation}\label{Wulffshape}
  K\, : =\, \bigcap_{\nu\in\S^{n-1}}\big\{x\in\R^{n}  \, \big\vert\,  x\cdot\nu<f(\nu)\big\}\,;
\end{equation}
see \cite{brothersmorgan,fonseca_wulff_rev,fonsecamuller_wulff,taylor_roma_73,taylorstanford75}. 
The Wulff shape is a bounded convex set that plays the role of the {\it ball} in the anisotropic setting.

Given that the anisotropic perimeter dominates the nonlocal term for small mass, 
one might initially be tempted to infer that, in this regime, the perimeter completely determines the behavior of minimizers, and minimizers are 
the Wulff shape for small mass. On the other hand, this would require Wulff shapes to be critical points of the {\it isotropic} repulsion. It turns out that the Euler-Lagrange equation serves as an obstruction for the minimality of the Wulff shape when $f$ satisfies certain smoothness and ellipticity conditions, but not in general: the Wulff shape can be a minimizer of \eqref{eqn: aniso drop}.
As we show, the role of the Wulff shape  {\it depends crucially on the regularity and ellipticity of $f$.} 
 To this end, let us introduce two important classes of surface tensions. 
We say that $f$ is a {\it smooth elliptic surface tension} if $f \in C^\infty(\R^n \setminus\{0\})$ and there exist constants $0<\l\leq\Lambda<\infty$ such that for every $\nu\in\S^{n-1}$,
\[
  \l\,|\tau|^2\leq \nabla^2f(\nu)[\tau,\tau]\leq\Lambda\,|\tau|^2\,
\]
for all $\tau \in \R^n$ with $\tau\cdot \nu =0.$
For such surface tensions, the corresponding Wulff shape has $C^\infty$ boundary and is uniformly convex.
 We say that $f$ is a {\it crystalline} surface tension if for some $N$ finite and $x_i \in \R^n,$
\[
f(\nu) = \max_{1\leq i\leq N} x_i \cdot \nu.
\]
For crystalline surface tensions, the corresponding Wulff shape $K$ is a convex polyhedron.

%\begin{itemize}
%	\item We say that $f$ is a {\it $C^2$ elliptic surface tension} if $f \in C^2(\R^n \setminus\{0\})$ there exist constants $0<\l\leq\Lambda<\infty$ such that for every $\nu\in\S^{n-1}$,
%\[
%  \l\,\Id\leq \nabla^2f(\nu)\leq\Lambda\,\Id\qquad\mbox{on $\nu^\perp=T_\nu\S^{n-1}$}\,.
%\]
%For such surface tensions, the corresponding Wulff shape has $C^2$ boundary and is uniformly convex.
%\vspace{0.1cm}
%\item  We say that $f$ is a {\it crystalline} surface tension if for some $N$ finite and $x_i \in \R^n,$
%\[
%f(\nu) = \max_{1\leq i\leq N} x_i \cdot \nu
%\]
%For crystalline surface tensions the corresponding Wulff shape $K$ is a convex polyhedron
%\end{itemize}

\eject
One of the main contributions of this paper is the following two contrasting Theorems: one general result for 
smooth surface elliptic tensions $f$ and  a sharply contrasting  example for a crystalline surface tension in 2D. 
\begin{theorem}\label{thm: not wulff}
Let $n \geq 2$ and $m>0$. Let $f$ be a smooth elliptic surface tension with Wulff shape $K$. Then we have the following two statements. 
\begin{itemize}
\item[(i)] 	Suppose  \[ \alpha \in \left(0,n-\frac{1}{3}\right).\]
 Then   $K$ is a critical point of \eqref{eqn: aniso drop} if and only if $f$ is the Euclidean norm. 
\vspace{0.1cm}
\item[(ii)] Suppose \[ \alpha \in \left(0,n- (\sqrt{2}-1)\right).\]
 Then there exists $\bar m_1$ depending on $n, f$, and $\alpha$ such that the following holds. Suppose $E$ is a minimizer of \eqref{eqn: aniso drop} for mass $m \leq \bar m_1$. Then for no other mass $m$ a dilation of $E$ is even a critical point, unless $f$ is the Euclidean norm and $E$ is a ball.
\end{itemize}
\end{theorem}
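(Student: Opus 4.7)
My strategy is to exploit the Euler--Lagrange equation for volume-constrained critical points of $\mathcal{E}_f$ and reduce both statements to an anisotropic Alexandrov-type theorem combined with an inverse-potential (Newton-type) theorem for Riesz potentials. For any sufficiently smooth volume-constrained critical point $E$ of $\mathcal{E}_f$, the first variation yields
\begin{equation*}
H_f^E(x) + 2\,v_E(x) = c_0 \qquad \text{for } x\in\partial^*E,
\end{equation*}
where $H_f^E$ denotes the anisotropic mean curvature of $\partial E$ relative to $f$ and $v_E(x):=\int_E |x-y|^{-\alpha}\,dy$ is the Riesz potential of $E$.

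For part (i), the Wulff shape $K$ of a smooth elliptic $f$ has constant anisotropic mean curvature, so $K$ is a critical point of $\mathcal{E}_f$ if and only if $v_K$ is constant on $\partial K$. I would then appeal to an inverse-potential theorem: for $\alpha\in(0,n-\tfrac{1}{3})$, the only bounded open set with $C^{1,\gamma}$ boundary whose Riesz $\alpha$-potential is constant on its boundary is a ball. This characterization can be approached by the method of moving planes applied to $v_K$ viewed as a solution of an overdetermined fractional problem (in the spirit of Serrin, Reichel, Fraenkel); the quantitative restriction $\alpha<n-\tfrac{1}{3}$ comes from the decay and integrability bounds needed to close the moving-planes scheme. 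Once $K$ is known to be a ball, the definition~\eqref{Wulffshape} immediately forces $f(\nu)=c|\nu|$.

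For part (ii), I would combine regularity with a scaling argument. Regularity theory for quasi-minimizers of anisotropic perimeter is applicable because the Riesz term is a lower-order perturbation when $|E|\leq\bar m_1$ is small, and this ensures that any such minimizer $E$ has $C^{2,\beta}$ boundary, so the Euler--Lagrange equation above holds classically on $\partial E$. Suppose now that for some $\lambda\neq1$ the dilation $\lambda E$ (up to translation, which is automatic by translation-invariance) is also a critical point at mass $\lambda^n m$. Using $H_f^{\lambda E}(\lambda x)=\lambda^{-1}H_f^E(x)$ and $v_{\lambda E}(\lambda x)=\lambda^{n-\alpha}v_E(x)$, the two Euler--Lagrange equations become the linear system
\begin{equation*}
H_f^E(x)+2\,v_E(x)=c_0, \qquad \lambda^{-1}H_f^E(x)+2\lambda^{n-\alpha}v_E(x)=c_1 \qquad \text{on } \partial E.
\end{equation*}
Since $\lambda\neq 1$ and $n-\alpha+1>0$, the $2\times 2$ coefficient matrix is invertible and both $H_f^E$ and $v_E$ must be \emph{individually} constant on $\partial E$. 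The anisotropic Alexandrov theorem (valid for smooth elliptic $f$ and $C^2$ hypersurfaces) then forces $E$ to be a translate of a dilated Wulff shape, while the inverse-potential theorem of part (i) forces $E$ to be a ball, so once again $f$ must be the Euclidean norm and $E$ a ball.

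The main obstacle is the inverse-potential theorem and the sharp range of $\alpha$. The gap between $n-\tfrac{1}{3}$ in (i) and $n-(\sqrt{2}-1)$ in (ii) reflects the fact that in (i) one applies the moving-planes scheme to the highly symmetric Wulff shape (uniformly convex, $C^\infty$), whereas in (ii) the argument must be executed with only $C^{2,\beta}$ control coming from quasi-minimality, and one must also certify that $\bar m_1$ is small enough that the minimizer is indeed close to $K$ and hence convex enough to admit moving planes. Tracking the decay of $v_E$ and its derivatives carefully enough to close the argument under these weaker a priori assumptions, and thereby pinning down the exact exponent $n-(\sqrt{2}-1)$, is the principal technical hurdle.
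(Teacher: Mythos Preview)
Your proposal is correct and follows essentially the same route as the paper: write the Euler--Lagrange equation, use that the Wulff shape has constant anisotropic mean curvature for (i), use a scaling/linear-system argument for (ii) to force $v_E$ and $H_f^E$ to be separately constant on $\partial E$, and then appeal to the Riesz inverse-potential theorem (Theorem~\ref{thm: Riesz char}) and the anisotropic Alexandrov theorem of \cite{AnisoAlex}. Your identification of the source of the two different $\alpha$-thresholds is also right: in (i) the Wulff shape is $C^\infty$, so Theorem~\ref{thm: Riesz char} applies throughout its full range $\alpha<n-\tfrac13$, whereas in (ii) the minimizer is only $C^{2,\beta}$ for $\beta<n-\alpha$ by Theorem~\ref{thm: regularity}, and matching this against the hypothesis $2+\gamma>1/(n-\alpha)$ of Theorem~\ref{thm: Riesz char} yields exactly $\alpha<n-(\sqrt{2}-1)$.

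One small correction: the moving-planes argument behind Theorem~\ref{thm: Riesz char} does \emph{not} require convexity of $E$, only the stated boundary regularity; so the role of $\bar m_1$ is purely to guarantee $C^{2,\beta}$ regularity of the minimizer (Theorem~\ref{thm: regularity}(ii)), not to ensure it is ``convex enough to admit moving planes.''
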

When surface tensions (e.g. crystalline) lack these smoothness and ellipticity properties,  it no longer makes sense to write down the Euler-Lagrange equation of \eqref{eqn: aniso drop} for arbitrary (smooth, compactly supported) variations. This means that the analysis for Theorem \ref{thm: not wulff} 
cannot be extended to this case. However, this is not purely a technical issue: Indeed, in contrast to the smooth elliptic case, we have the following result in the crystalline case.
\begin{theorem}\label{thm: crystals} 
Let $n=2$,  $\alpha \in (0,2)$, and let $f$ be the surface tension 
\begin{equation}
	\label{eqn: f}
	f(\nu) = \frac{1}{2} \| \nu\|_{\ell^1(\R^2)}=\frac{1}{2} (|\nu\cdot e_1|+|\nu \cdot e_2|),
\end{equation}
 whose corresponding Wulff shape $K$ is the square $[-1/2,1/2]\times[-1/2,1/2]$ of volume one.
 There exists $\bar m_2$ depending on $\alpha$ such that for $m\leq \bar m_2$ the Wulff shape is the unique (modulo translations) minimizer of \eqref{eqn: aniso drop}.
\end{theorem}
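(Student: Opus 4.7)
The plan is in two stages. First, use the small-mass regime—where the anisotropic perimeter dominates over the nonlocal term—to force minimizers of \eqref{eqn: aniso drop} into a small Hausdorff neighborhood of $K$. Second, exploit the crystalline (square) structure of $K$ via a rigidity argument to upgrade this closeness to exact equality.

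\emph{Stage 1 (reduction to near-Wulff sets).} By Theorem~\ref{thm:exist_nonexist}, a minimizer $E_m$ exists for all small $m$. Rescale as $E_m = m^{1/2}E'_m$ with $|E'_m|=1$, so that
\[
\E_f(E_m) \,=\, m^{1/2}\F(E'_m) \,+\, m^{(4-\alpha)/2}\V(E'_m).
\]
Comparison with the Wulff competitor $m^{1/2}K$ and the Wulff inequality $\F(E'_m)\ge \F(K)$ yield
\[
0 \,\le\, \F(E'_m) - \F(K) \,\le\, m^{(3-\alpha)/2}\bigl(\V(K) - \V(E'_m)\bigr) \,\le\, C\,m^{(3-\alpha)/2},
\]
where the last bound uses $\V(E'_m)\le \V(B_1)$ via Riesz rearrangement. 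Since $\alpha<2$, the perimeter excess tends to zero, and the quantitative anisotropic isoperimetric inequality of Figalli--Maggi--Pratelli yields a quantitative bound $A(E'_m)\le C\,m^{(3-\alpha)/4}\to 0$. Hence, after a suitable translation, $E'_m\to K$ in $L^1$. Standard density estimates for $(\Lambda,r_0)$-minimizers of $\F$ (with $\Lambda$ arising from the nonlocal term) then upgrade this convergence to uniform Hausdorff convergence $\pa E'_m\to \pa K$.

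\emph{Stage 2 (crystalline rigidity).} For the $\ell^1$ anisotropy on $\R^2$ the projection identity
\[
\int_{\pa^*E}|\nu\cdot e_i|\,d\H^1 \,=\, 2\,\H^1\bigl(\pi_{e_i}(E^{(1)})\bigr), \qquad i=1,2,
\]
yields $\F(E) = \H^1(\pi_{e_1}(E^{(1)})) + \H^1(\pi_{e_2}(E^{(1)}))$ and, via AM--GM, the sharp Wulff inequality $\F(E)\ge 2\sqrt{|E|}$ with equality characterizing axis-aligned squares. Together with the $L^1$-Lipschitz bound $|\V(E)-\V(F)|\le C|E\triangle F|$ (valid on sets of bounded measure because $\alpha<2$), one analyzes an arbitrary Hausdorff-small perturbation of $K$ via three canonical deformation types: (i) rotations, producing \emph{linear} perimeter excess $\sim t$ with no change in $\V$; (ii) aspect-ratio reshapings $R_t=[0,1+t]\times[0,1/(1+t)]$, producing matching \emph{quadratic} perimeter excess and nonlocal deficit $\sim t^2$; and (iii) localized bumps and dents, whose perimeter excess is proportional to their height while their nonlocal contribution is controlled by their area. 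Substituting each into the minimality inequality $\F(E'_m) - \F(K) \le m^{(3-\alpha)/2}(\V(K) - \V(E'_m))$, the factor $m^{(3-\alpha)/2}$ prevents the nonlocal gain from compensating the perimeter cost for each type, once $m$ is below a threshold $\bar m_2$, unless the perturbation is trivial — forcing $E'_m = K + x_m$.

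\emph{Main obstacle.} The delicate case is (ii), where both sides of the minimality inequality are of order $t^2$. Ruling it out requires a matched pair of sharp bounds: a quadratic lower bound on $\F - \F(K)$ from the projection identity and AM--GM, and a quadratic upper bound on $\V(K) - \V$ obtained from the fact that $K$ is a strict, isolated maximizer of $\V$ among unit-area rectangles with strictly negative second variation at $t=0$. Organizing a general Hausdorff-small deformation of $K$ into the above three canonical types and controlling the cross-interactions between them — which is only possible because $K$ is the polyhedral Wulff shape of a crystalline anisotropy — constitutes the technical heart of the argument and yields uniqueness modulo translations for $m\le\bar m_2$.
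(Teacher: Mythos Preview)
Your Stage~1 is fine and matches the paper's setup: quasi-minimality, density estimates, and Hausdorff convergence of rescaled minimizers to $K$.

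The gap is in Stage~2. You propose to handle an arbitrary Hausdorff-small perturbation of $K$ by decomposing it into three ``canonical deformation types'' (rotation, aspect-ratio, bump/dent) and controlling cross-interactions. But no such decomposition theorem exists: a set of finite perimeter that is Hausdorff-close to $K$ need not be convex, need not be a graph over $\partial K$, and certainly need not split cleanly into these three modes with additive perimeter and nonlocal contributions. You acknowledge this as ``the technical heart of the argument'' but do not actually carry it out, and there is no evident way to do so. Your type~(iii) analysis in particular is too coarse: a bump of height $h$ and width $w$ has perimeter cost of order $h$ only for a single rectangular bump; a general perturbation can oscillate with many small features whose individual perimeter costs are not linearly summable against a single nonlocal bound.

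The paper's proof bypasses this difficulty entirely by invoking the rigidity theorem of Figalli--Maggi (Theorem~\ref{thm:cryst_min}): in two dimensions, any $q$-volume-constrained quasi-minimizer of a crystalline $\F$ with $q$ small enough is a convex polygon whose normals lie in the normal set of $K$. Since minimizers of \eqref{eqn: aniso drop 2} are such quasi-minimizers with $q=c_{n,\alpha}\e$, and since Hausdorff convergence forces all four normals $\{\pm e_1,\pm e_2\}$ to appear, the minimizer is already a rectangle $R_a$. The problem then collapses to a one-parameter family, and a direct second-derivative computation in $a$ (which is essentially your type~(ii) analysis) finishes the proof. So your aspect-ratio argument is the right endgame, but you are missing the structural input that reduces the problem to that one-parameter family; without Figalli--Maggi or an equivalent rigidity statement, Stage~2 does not go through.
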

%The proof of Theorem \ref{thm: not wulff} is the basis for Section \ref{sec:smooth_aniso} while the proof of Theorem \ref{thm: crystals} is in Section \ref{sec:cryst_aniso}.  

Let us provide a few comments on these theorems and their proofs. Theorem~\ref{thm: not wulff} sheds considerable light on the case (G\ref{item:G2}), which states that balls minimize \eqref{eqn: iso drop} for small masses. It is tempting to interpret this result as a consequence of scaling: for sufficiently small mass, the perimeter term dominates the nonlocal term and completely governs the behavior of minimizers, and hence the minimizers are balls. Theorem~\ref{thm: not wulff}(i) shows that this is not the case, since the energies in \eqref{eqn: iso drop} and \eqref{eqn: aniso drop} scale in the same way. 
Moreover,  in contrast to the classical liquid drop model where for every mass below a certain threshold, the minimizer of \eqref{eqn: iso drop} is just a dilation of the same set (i.e., the ball), if $E$ is a minimizer of \eqref{eqn: aniso drop} for suitably small mass, then a dilation of $E$ cannot be critical for any other mass.
A natural question then remains as to the nature of minimizers. To this end, we do present some partial results in 
Theorem \ref{thm: regularity} by showing that a minimizer is a small uniformly convex perturbation of the Wulff shape. A natural question then remains as to the nature of minimizers. After rescaling, the boundaries of minimizers converge in the Hausdorff topology to the boundary of the Wulff shape as $m\to 0$. For smooth and elliptic surface tensions, the regularity theory then implies that rescaled minimizers converge smoothly to the Wulff shape (in particular, they are uniformly convex for $m$ sufficiently small).  See Section 2 for more details. It is not clear if one could expect to give an explicit characterization of minimizers.

%Rather, it is somehow coincidental that balls are minimizer in \eqref{eqn: iso drop}, as further emphasized by  Theorem~\ref{thm: not wulff}(ii). 
%\replaceR{Besides regularity of minimizers (established in Theorem \ref{thm: regularity}), the new result needed to prove Theorem ~\ref{thm: not wulff} is the fact that  only balls are critical points of $\V$ for the cases where $\alpha \in [n-1, n - \frac{1}{3})$. To this end, we build on previous methods to prove the following:  }

The proof of Theorem~\ref{thm: not wulff} is based upon an analysis of the first variation of \eqref{eqn: aniso drop}.
Besides regularity of minimizers (established in Theorem \ref{thm: regularity}) and a characterization of sets with constant first variation of $\F$ in \cite{AnisoAlex}, the main tool needed to prove 
Theorem ~\ref{thm: not wulff} is the fact that only balls have constant first variation of $\V$. While this was known for $\alpha \in (0,n-1)$, we produce more delicate arguments to extend the result to $\alpha \in [n-1, n - \frac{1}{3})$. To this end, we use a moving planes argument to prove the following:  

%\replaceR{\begin{theorem}\label{thm: Riesz char}
% Fix $n\geq 2$ and $\alpha \in (0, n-1/3)$. Suppose that $E\subset \R^n$ is a bounded $C^{k,\gamma}$ domain with $\gamma \in [0,1]$, $k\geq 1$, and $k+\gamma >1/(n-\alpha)$. Let $v_E(x)$ be the Riesz potential \[
% v_E(x)= \int_{E} \frac{dy}{|x-y|^\alpha }.
%\]
% If $v_E$ is constant on $\pa E$, then $E$ is a ball.
%\end{theorem}
%}

\begin{theorem}\label{thm: Riesz char}
 Fix $n\geq 2$ and $\alpha \in (0, n-1/3)$. Suppose that $E\subset \R^n$ is a bounded domain with $\pa E$ of class
\begin{align*}
 C^1\ \;  & \text{ if } \alpha<n-1\, ,\\
 C^{1, \gamma} & \text{ with } 1 + \gamma> 1/(n-\alpha) \text{ if } \alpha \in [n-1, n-1/2)\,,\\
 C^{2,\gamma} & \text{ with } 2+ \gamma > 1/(n-\alpha)\text{ if } \alpha \in [n-1/2, n-1/3)\,.
\end{align*}
Let $v_E(x)$ be the Riesz potential \[
 v_E(x)= \int_{E} \frac{dy}{|x-y|^\alpha }.
\]
 If $v_E$ is constant on $\pa E$, then $E$ is a ball.
\end{theorem}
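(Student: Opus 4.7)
The plan is to apply the method of moving planes in each direction $e \in \mathbb{S}^{n-1}$ and conclude that $E$ is symmetric under reflection across some affine hyperplane orthogonal to $e$; arbitrariness of $e$ then forces $E$ to be a ball. Fix $e$, let $T_\lambda := \{x \cdot e = \lambda\}$, $H_\lambda^\pm := \{\pm(x\cdot e - \lambda) > 0\}$, $x_\lambda := x - 2((x\cdot e)-\lambda)e$, and $(E_+^\lambda)^* := \{x_\lambda : x \in E \cap H_\lambda^+\}$. Starting from $\lambda$ so large that $E \subset H_\lambda^-$ and decreasing, define
\[
\lambda_0 := \inf\bigl\{\lambda \in \mathbb{R} : (E_+^{\lambda'})^* \subset E \text{ for every } \lambda' \geq \lambda\bigr\}.
\]
At $\lambda_0$ the standard trichotomy realizes exactly one of two tangency configurations: Case (i), an interior tangency at a point $x_0 \in \partial E \cap \partial (E_+^{\lambda_0})^*$ in the open half-space $H_{\lambda_0}^-$; or Case (ii), a boundary tangency at $x_0 \in \partial E \cap T_{\lambda_0}$ where the outer normal $\nu_E(x_0)$ is perpendicular to $e$. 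In both cases I would show that the excess set $D := (E \cap H_{\lambda_0}^-) \setminus (E_+^{\lambda_0})^*$ has Lebesgue measure zero, which yields the desired reflection symmetry.

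Case (i) is the easier one: both $x_0$ and $(x_0)_{\lambda_0}$ lie on $\partial E$, so the hypothesis gives $v_E(x_0) - v_E((x_0)_{\lambda_0}) = 0$. Using the reflection identity $|x - y_\lambda| = |x_\lambda - y|$, decomposing $E = (E \cap H_{\lambda_0}^+) \cup (E_+^{\lambda_0})^* \cup D$, and performing the reflection change of variables, the contributions from $E \cap H_{\lambda_0}^+$ and $(E_+^{\lambda_0})^*$ cancel pairwise, leaving
\[
0 = v_E(x_0) - v_E((x_0)_{\lambda_0}) = \int_D \bigl(|x_0 - y|^{-\alpha} - |(x_0)_{\lambda_0} - y|^{-\alpha}\bigr)\, dy.
\]
Since $|(x_0)_{\lambda_0} - y|^2 - |x_0 - y|^2 = 4(\lambda_0 - x_0\cdot e)(\lambda_0 - y\cdot e) > 0$ for $y \in D$, the integrand is strictly positive, forcing $|D|=0$.

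In Case (ii) the vector $e$ is tangent to $\partial E$ at $x_0$, so the constancy of $v_E$ on $\partial E$ yields $\partial_e v_E(x_0) = 0$. Differentiating formally under the integral and applying the same decomposition and reflection, the contributions from $E \cap H_{\lambda_0}^+$ and $(E_+^{\lambda_0})^*$ once more cancel exactly, leaving
\[
\partial_e v_E(x_0) = -\alpha \int_D \frac{\lambda_0 - y\cdot e}{|x_0 - y|^{\alpha + 2}}\, dy,
\]
which is strictly negative whenever $|D| > 0$; so $|D|=0$ in this case as well.

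The main obstacle, and the source of the $\alpha$-dependent regularity hypotheses, is to justify the tangential differentiation and the exchange of $\partial_e$ with the integral when $\alpha \geq n-1$, a range where the candidate integrand $|x_0 - y|^{-\alpha - 1}$ fails to be integrable on $E$ near $x_0$. To rescue the calculation I would represent $\partial E$ locally near $x_0$ as the graph of a $C^{k,\gamma}$ function over its tangent plane (with $k=1$ for $\alpha \in [n-1, n-\tfrac12)$ and $k=2$ for $\alpha \in [n-\tfrac12, n-\tfrac13)$), then exploit the tangentiality of $e$ together with a Taylor expansion of the graph to extract cancellation between the pieces of $E$ lying symmetrically on either side of $x_0$. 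The remaining principal-value integral is absolutely convergent precisely when $(k+\gamma)(n-\alpha) > 1$, which matches the stated Hölder thresholds and explains the ceiling $\alpha = n - \tfrac13$. I expect the bulk of the work to lie in making these expansions and cancellations uniform at $x_0$ and rigorously passing the formal differentiation through; once this is done, the symmetry argument above closes cleanly and a ball is recovered.
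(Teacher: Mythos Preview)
Your moving-planes setup and Case (i) match the paper. The gap is in Case (ii) for $\alpha \geq n-1$. You assert that constancy of $v_E$ on $\partial E$ gives $\partial_e v_E(x_0)=0$, but in this range $v_E$ is only $C^{0,\beta}$ with $\beta<n-\alpha<1$, so the classical directional derivative need not exist at all. The points $x_0\pm he$ do not lie on $\partial E$, so the hypothesis gives no direct control of $v_E(x_0+he)-v_E(x_0-he)$. Your proposed remedy---regularising the integral as a principal value via graph cancellation---addresses only the ``exchange of $\partial_e$ with the integral'' half of the problem. Even if that PV integral converges, you still have to link it to the boundary constancy, and that link is precisely where the threshold $(k+\gamma)(n-\alpha)>1$ enters; it does not come from integrability of the PV integral (which in fact converges under a weaker condition).

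The paper sidesteps defining $\partial_e v_E(x_0)$ altogether. In graph coordinates at $x_0$ it takes the sequence $x_h=-he_1+\varphi(-he_1)e_n\in\partial E$ and compares $v(x_h)$ with $v(x_h^\lambda)$ at the reflected point. Since $x_h\in\partial E$, $v(x_h)=c$; and a Taylor expansion gives $\mathrm{dist}(x_h^\lambda,\partial E)\leq Ch^{k+\gamma}$ (the choice of the pure $e_1$-direction is essential in the $C^{2,\gamma}$ case, since $\partial E$ and its reflection separate only quadratically in generic tangential directions). H\"older continuity of $v$ then yields $|v(x_h^\lambda)-v(x_h)|\leq C h^{(k+\gamma)\beta}$ for $\beta<n-\alpha$, which is $o(h)$ exactly when $(k+\gamma)(n-\alpha)>1$. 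The matching lower bound $|v(x_h^\lambda)-v(x_h)|\geq Ch$ comes from the reflection identity integrated over a fixed ball contained in $G_\lambda$ and bounded away from $x_0$, so no singular integral ever appears. This finite-difference-along-the-boundary argument is the missing idea in your sketch.
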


Our Riesz potential restrictions  $\alpha \in (0, n-1/3)$ in Theorem \ref{thm: Riesz char} and  Theorem \ref{thm: not wulff} (i), as well as the requirement $\alpha \in \left(0,n- (\sqrt{2}-1)\right)$ for Theorem \ref{thm: not wulff} (ii), warrant the following remark. 

\bigskip

\begin{remark}[\emph{The range of Riesz potentials}]\label{remark-0}
%\replaceR{
%Theorem~\ref{thm: Riesz char} was established for the Coulombic case $\alpha=n-2$ in \cite{Fraenkel00} and was extended to $\alpha \in (0,n-1)$ in \cite{LuZhu12}, both using the method of moving planes. (See also \cite{Reichel09} for the case of convex sets when $\alpha \in (0,n-2]$.) We extend the result to the subtler case $\alpha \in [n-1,n-1/3)$. The substantial challenge for extending the result in this case is that the Riesz potential $v_E$ is merely H\"{o}lder continuous; recall \eqref{eqn: Holder}. Our proof, uses the method of moving planes and is inspired by \cite{LuZhu12} (which in turn builds on ideas from \cite{ChLiOu2006}). However, to circumvent the use of $C^1$ regularity that is typically needed in applications of the method of moving planes requires some delicate analysis and new reflection arguments. We expect that Theorem~\ref{thm: Riesz char} should hold for $\alpha \in [n-1/3, n)$ as well, but it does not seem that the argument given here can be extended. 
%}

Theorem~\ref{thm: Riesz char} was established for the Coulombic case $\alpha=n-2$ in \cite{Fraenkel00} and was extended to $\alpha \in (0,n-1)$ in \cite{LuZhu12}, both using the method of moving planes; see also \cite{Reichel09}. 
The case when $\alpha \geq n-1$ is significantly more delicate, principally due to the fact that the Riesz potential $v_E$ is merely H\"{o}lder continuous in this case; see \eqref{eqn: Holder}.

Our proof of Theorem~\ref{thm: Riesz char} in the subtler case $\alpha \in [n-1,n-1/3)$ pairs the method of moving planes on integral forms in the spirit of \cite{ChLiOu2006,LuZhu12} with some new reflection arguments and estimates on how the Riesz potential grows compared to its reflection across a hyperplane.
%. To circumvent the use of $C^1$ regularity that is typically needed in applications of the method of moving planes, we assume additional regularity of the boundary and prove some delicate analysis and .
 
%A clear open question left by Theorem~\ref{thm: Riesz char} is whether the result can be extended to $\alpha \in [n-1/3, n)$. We expect that it can, but the argument we give here cannot be extended to this case. One fruitful perspective may be to view  
%Theorem~\ref{thm: Riesz char} as an overdetermined problem for the fractional Laplacian.

In order to apply 
Theorem~\ref{thm: Riesz char} to Theorem \ref{thm: not wulff} (i) and (ii), we need to establish, respectively,  regularity for the 
Wulff shape $K$ and for a minimizer $E$ of  \eqref{eqn: aniso drop} for mass $m$. The regularity of the Wulff shape $K$ depends on that of the elliptic surface tension which we have conveniently assumed to be $C^\infty$. With this smoothness assumption, we have sufficient regularity to directly employ Theorem~\ref{thm: Riesz char}. On the other hand, the regularity result for minimizers of $E$ (Theorem \ref{thm: regularity}) gives a further restriction on $\alpha$, yielding the assumption  $\alpha \in \left(0,n- (\sqrt{2}-1)\right)$.

After the submission of this article, G\'{o}mez-Serrano, Park, Shi, and Yao extended Theorem~\ref{thm: Riesz char} in \cite[Theorem C]{GPSY19} to the full range $\alpha \in (0,n)$ using continuous Steiner symmetrization. Furthermore, their proof of Theorem~\ref{thm: Riesz char} applies to sets with Lipschitz regular boundaries. With this result in hand, one can remove the technical assumptions from Theorem~\ref{thm: not wulff}(i) and (ii) to extend the results to all $\alpha \in (0,n).$
\end{remark}

\bigskip

Our Theorem \ref{thm: crystals} is in contrast to the 
 smooth elliptic setting, and together with Theorem~\ref{thm: not wulff} 
demonstrates an interesting situation where the regularity and ellipticity of the surface tension govern a fundamental aspect of the problem: whether the isoperimetric set is the minimizer of \eqref{eqn: aniso drop}.
Typically in anisotropic isoperimetric problems, the regularity and ellipticity of the surface tension affect quantitative aspects of the problem (for instance, regularity of quasi-minimizers), but not qualitative aspects of the problem. 
Theorem \ref{thm: crystals} should be regarded as an example (or counter-example), and not generic for crystalline surface tensions.  It is crucially based upon a 2D result of Figalli and Maggi (cf. Theorem \ref{thm:cryst_min}) which proves that quasi-minimizers of crystalline anisotropic surface energies must be convex polygons. Minimizers of \eqref{eqn: aniso drop} are quasi-minimizers of the anisotropic perimeter, so this effectively transforms \eqref{eqn: aniso drop} to a finite dimensional problem. For the simple case of a square Wulff shape, one can explicitly calculate $\V$. 
While we believe the result holds true in 2D for Wulff shapes that are regular polygons, our calculation uses the symmetries of the Wulff shape given by the dihedral group of order 8. Thanks to a recent result by Figalli and Zhang posted after the submission of this article (see \cite[Theorem 1.1]{FiZh2019}), it is also possible to extend our Theorem \ref{thm: crystals} to higher dimensions where the Wulff shape is given by a cube (cf. Remark \ref{rem:higher D}).

We remark that the subtleties of addressing \eqref{eqn: aniso drop} for crystalline surface tensions 
 highlights the  lack of a general theory in the modern calculus of variations to address extremal 
notions, like  criticality,  for nondifferentiable, nonconvex functionals. A key point becomes understanding among what variations one can compute the Euler-Lagrange equation and whether one can derive meaningful information from computing first and second variations among a restricted class of variations. This question is an important one in the setting of crystalline mean curvature flow; see, among others, \cite{bell04,BeNoPa2001_1,BeNoPa2001_2,bellettininovagariey,ChaMorPon15,ChMoNoPo17,taylor78,taylor_1993} and references therein. The paper \cite{DMMN} also investigates this theme.

\smallskip

\subsection{Class II: Anisotropy in the Surface Energy and the Repulsive Term}

In light of Theorem \ref{thm: not wulff} and the physical motivation, it would seem natural to replace the repulsive term $\V(E)$ with $f$-driven anisotropic interactions which are maximized (under fixed volume) by the Wulff shape $K$. 
To this end, let us assume the surface tension  $f$ is smooth and elliptic and denote by 
 $f_*$  the dual to $f$ defined by 
	\[
		f_*(x) = \sup \{ x\cdot \nu \, \big\vert \,  f(\nu )\leq 1\}\,.
	\]
Note that the Wulff shape $K$ can be equivalently expressed as the unit ball for $f_*$, that is, $K = \{ x: f_*(x) <1\}.$
We consider three classes of variational problems in the spirit of \eqref{eqn: aniso drop}:
% and show that for these problems, Wulff shapes \textit{are} the unique minimizers when the mass is suitably small. More specifically, recalling the definition of $f_*(X)$ from Section~\ref{subsec: wulff} we consider the variational problems 
\begin{equation}\label{eqn: general}
\inf\Big\{
\F(E) + \mathcal{U}_1(E) \, \big\vert \,    |E| =m\Big\}\,
\end{equation}
and
\begin{equation}\label{eqn: general confined}
\inf\Big\{
\F(E) + \mathcal{U}_i(E) \, \big\vert \,   |E| =m, \ E \subset B_{c_{n,f}m^{1/n}}\Big\}\,, \qquad i =2,3,
\end{equation}
for some $c_{n,f}$ depending on $n$ and $f$, where we let
 \begin{equation}\label{eqn: functionals} \begin{split}
 \mathcal{U}_1(E) &:= \sup_{y \in \R^n} \int_E f_*(x-y)^{-\alpha} \, dx \qquad \text{ for } \alpha \in (0,1)\,\\
 \mathcal{U}_2(E) &:= - \inf_{y\in \R^n}\int_E f_*(x-y)^{\beta} \, dx \qquad \text{ for } \beta \in (0,\infty)\,,\\
 \mathcal{U}_3(E) &:=- \inf_{y \in \R^n } \int_E \log (f_*(x-y)) \, dx\,.
 \end{split}
 \end{equation}
The confinement constraint in \eqref{eqn: general confined} is needed, otherwise the infimum is minus infinity for all $m$ and so no minimizer exists.
As we show in Section \ref{sec:doubly_aniso},  each $\mathcal{U}_i(E)$ in \eqref{eqn: functionals} is maximized by the Wulff shape among sets of a fixed volume, so the variational problems \eqref{eqn: general} and \eqref{eqn: general confined} exhibit  {\it both} of the analogous two features of the isotropic LD model \eqref{eqn: iso drop}.  In Section \ref{sec:doubly_aniso} we prove the following theorem.

\begin{theorem}\label{thm: wulff min} Let $n\geq 2$ and let $f$ be a smooth elliptic surface tension. There exists a constant $m'=m'(n, f, \mathcal{U}_i)$ such if $m \leq m'$, any minimizer of \eqref{eqn: general} or \eqref{eqn: general confined} is a Wulff shape. 
\end{theorem}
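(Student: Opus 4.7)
I would combine a scaling reduction with a dichotomy based on the Fraenkel-type asymmetry, invoking the quantitative anisotropic isoperimetric inequality for competitors far from the Wulff shape and a Fuglede-type second-variation argument for those close to it, in the spirit of selection-principle proofs for nonlocal perturbations of isoperimetric problems. The first step is to rescale: given a candidate minimizer $E$ with $|E|=m$, set $E_1 := m^{-1/n}E$. Using the positive $1$-homogeneity of $f_*$, a direct change of variables yields
\[
\mathcal{E}_f(E) \;=\; m^{(n-1)/n}\bigl[\F(E_1) + \epsilon_i(m)\,\mathcal{U}_i(E_1)\bigr] + h_i(m),
\]
where $h_i(m)$ depends only on $m$, and the small perturbation parameter $\epsilon_i(m)$ equals $m^{(1-\alpha)/n}$, $m^{(1+\beta)/n}$, or $m^{1/n}$ for $i=1,2,3$ respectively---in every case $\epsilon_i(m)\to 0$ as $m\to 0$. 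The problem reduces to minimizing $\widetilde{\mathcal{E}}(E_1) := \F(E_1) + \epsilon_i(m)\mathcal{U}_i(E_1)$ over $\{|E_1|=1\}$ (together with the rescaled confinement $E_1\subset B_{c_{n,f}}$ in cases $i=2,3$), a small perturbation of the anisotropic isoperimetric problem whose minimizer is $K_1$, the Wulff shape of unit volume.

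Next, I would split based on the asymmetry $A(E_1) := \inf_x |E_1 \triangle (x+K_1)|$. Each $\mathcal{U}_i$ is uniformly bounded on the admissible class---for $\mathcal{U}_1$ by the Wulff-shape rearrangement bound $\mathcal{U}_1(E_1)\leq \mathcal{U}_1(K_1)$ established in Section~\ref{sec:doubly_aniso} together with a crude lower bound, and for $\mathcal{U}_2,\mathcal{U}_3$ directly from the confinement---so $|\mathcal{U}_i(E_1)-\mathcal{U}_i(K_1)|\leq C_i$. Fixing a threshold $\delta>0$, if $A(E_1)>\delta$ the quantitative anisotropic isoperimetric inequality of Figalli--Maggi--Pratelli yields $\F(E_1)-\F(K_1)\geq c(n,f)\,\delta^2$, whence $\widetilde{\mathcal{E}}(E_1)-\widetilde{\mathcal{E}}(K_1)\geq c\delta^2 - \epsilon_i(m)\,C_i > 0$ for $m$ small, contradicting minimality.

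For $A(E_1)\leq\delta$, a Fuglede-type argument is needed. Since $\epsilon_i(m)\mathcal{U}_i$ is a bounded perturbation of $\F$, any minimizer $E$ is a $\Lambda$-minimizer of $\F$ with $\Lambda = O(\epsilon_i(m))$; by regularity theory for such objects under the smooth elliptic hypothesis on $f$ (analogous to Theorem~\ref{thm: regularity}), for $m$ and $\delta$ sufficiently small one obtains a $C^{2,\gamma}$ normal-graph parametrization $\partial E_1 = \{x + \varphi(x)\nu_{K_1}(x) : x\in\partial K_1\}$ with $\|\varphi\|_{C^{2,\gamma}}$ small. Expanding $\widetilde{\mathcal{E}}(E_1)-\widetilde{\mathcal{E}}(K_1)$ in $\varphi$, the first-order terms vanish: $\F$ has constant anisotropic mean curvature on $\partial K_1$, and a Wulff-shape rearrangement argument identifies $y^*=0$ as the unique optimal translation parameter for each $\mathcal{U}_i$ at $K_1$; an envelope-theorem computation combined with the identity $f_*\equiv 1$ on $\partial K_1$ shows each $\mathcal{U}_i$ also has constant first variation, which vanishes against the volume constraint $\int_{\partial K_1}\varphi\,d\H^{n-1}=0$. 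At second order, the anisotropic Fuglede inequality provides a coercive lower bound $\delta^2\F[\varphi] \geq c\,\|\varphi\|_{H^1(\partial K_1)}^2$ on the subspace orthogonal to translations, while the second variation of $\mathcal{U}_i$ is directly bounded by $|\delta^2\mathcal{U}_i[\varphi]|\leq C\|\varphi\|_{H^1}^2$. Taking $m$ small enough that $\epsilon_i(m)C<c/2$ forces $\varphi\equiv 0$, so $E_1$ coincides with $K_1$ modulo translation.

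The hard part will be the first and second variation analysis of the nonsmooth functionals $\mathcal{U}_i$, which involve a sup or inf over the translation parameter $y$. The key point is that the Wulff-shape rearrangement identifies $y^*=0$ uniquely at $K_1$ (using that $K_1=\{f_*\leq 1\}$), which enables the envelope theorem and reduces the variations to fixed-$y$ computations. The second delicate ingredient is the anisotropic Fuglede inequality itself, which crucially relies on the $C^\infty$ regularity and uniform convexity of $\partial K_1$ supplied by the smooth elliptic hypothesis on $f$.
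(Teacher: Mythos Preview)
Your approach is essentially the paper's: rescale to a small-$\epsilon$ problem, show minimizers are $C^1$-close normal graphs over $\partial K$ via quasi-minimality and regularity, apply the strong quantitative Wulff inequality (your ``anisotropic Fuglede inequality'' is exactly Proposition~\ref{Fug-type}) to bound $\F(E)-\F(K)$ below by $c\|\varphi\|_{H^1}^2$, and then Taylor-expand $\mathcal{U}_i$ to bound $\mathcal{U}_i(K)-\mathcal{U}_i(E)$ above by $C\|\varphi\|_{H^1}^2$, using the volume constraint to absorb the (constant) first variation into the quadratic terms. Your explicit dichotomy on the Fraenkel asymmetry via Figalli--Maggi--Pratelli is harmless but redundant: the regularity/improved-convergence argument you invoke in the small-asymmetry branch already shows, as in the paper's Claim, that \emph{every} minimizer is $C^1$-close to $K$ for small $\epsilon$, so the large-asymmetry branch is vacuous.
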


The main tool in Theorem~\ref{thm: wulff min} is a strong form of the quantitative Wulff inequality from \cite{Neum16}.
Our method of proof is flexible; the two key ingredients are the subcritical scaling of $\mathcal{U}_i$ with respect to the surface energy and the criticality of $K$, and one can adapt the proof to other functionals satisfying these properties.

\begin{remark}[\emph{Equilibrium Figures \`{a} la Poincar\'{e} with an Anisotropic Potential}]\label{remark-1}
Perhaps the most natural way to incorporate anisotropic repulsions would be to replace $\V(E)$ with 
\begin{equation}\label{eq-aniso-pot}
\V_f(E) = \int_E \int_E \frac{1}{f_*(x-y)^{\alpha}} \, dx dy\, ,
\end{equation}
and consider the minimization problem 
\begin{equation}\label{eqn: double anisotropic}
\inf\, \Big\{
\F(E) + \V_f(E) \, \big\vert \,   |E| =m\Big\}.
\end{equation}
Apart from the trivial case where  $ f(\nu) = | A \nu|$  for a positive definite matrix $A$ {\rm (}where a linear change of variables transforms \eqref{eqn: double anisotropic} into the isotropic LD model \eqref{eqn: iso drop}{\rm )}, we do not know whether the Wulff shape is the minimizer for \eqref{eqn: double anisotropic} for small mass.

The issue is related to the anisotropic problem 
\begin{equation}\label{anisotropic capacity problem}
 %\sup\, \Big\{ \V_f(E) \, \big\vert \,   |E| =m\Big\} \qquad {\rm or} \qquad
  \inf \, \Big\{ -\V_f(E) \, \big\vert \,   |E| =m\Big\} .
 \end{equation}
 In the isotropic case ($f$ being the Euclidean norm) 
 and $\alpha = n-2$ (Newtonian), the problem \eqref{anisotropic capacity problem} 
   has a long history and was made famous by Poincar\'{e} in 
his 1902 treatise {\rm \cite{Poincare2}} on equilbrium figures.  If the total angular momentum vanishes then the ``problem of the equilibrium figure" reduces to mimimization of  $- \V$  (with Newtonian $\alpha = n-2$) with its attractive gravitational potential. Poincar\'{e} 
 claimed the unique solution was the ball. A complete solution to the problem had to wait almost a century with the work of Lieb  {\rm \cite{Li2}} in 1977  whose proof was based 
on the strict Riesz rearrangement inequality.

%An important open problem  remains \eqref{anisotropic capacity problem}  for general $f$:  the problem for equilibrium figures in the presence of  an anisotropic potential.
The problem for equilibrium figures of anisotropic potentials as solutions of \eqref{anisotropic capacity problem} remains an important open problem for general $f$.
Unfortunately, for general $f$,  the  Riesz rearrangement inequality fails to hold true 
{\rm (cf. \cite{VANSCHAFTINGEN2006539})}. To our knowledge,  even the criticality of the Wulff shape for \eqref{anisotropic capacity problem} is unclear.
\end{remark}

\bigskip
\noindent{\bf Outline of the article.} 
 
\noindent  -- In  Section \ref{sec: prelim}, we present some preliminary facts about the variational problem \eqref{eqn: aniso drop} and deduce some a priori structure and regularity properties of minimizers, provided they exist. 
\medbreak

\noindent -- In Section \ref{sec:exist_nonexist}, 
we prove that (\ref{eqn: aniso drop}) admits a minimizer when $m$ is sufficiently small, while no minimizer exists when $m$ is sufficiently large.\medbreak

\noindent -- In Section \ref{sec:smooth_aniso}, we establish Theorem~\ref{thm: not wulff}, first proving Theorem \ref{thm: Riesz char}. 
\medbreak
 
\noindent  -- In Section \ref{sec:cryst_aniso} ,  we prove Theorem \ref{thm: wulff min}.
\medbreak

\noindent -- In Section \ref{sec:doubly_aniso}, we address the inclusion of anisotropic potentials, proving 
Theorem \ref{thm: wulff min}. 
\medbreak

\noindent -- In Section \ref{open}, we conclude by noting and recalling some open problems.
 \medbreak
 
\noindent --We finally include a few details in an appendix.

%
%\subsection{Some preliminaries about the anisotropic surface energy}
%
%
%
%We also let $f_*$ be the dual to $f$ defined by 
%\[
%f_*(x) = \sup \{ x\cdot \nu : f(\nu )\leq 1\}\,.
%\]
%The Wulff shape $K$ can be equivalently be expressed as the unit ball for $f_*$, that is, $K = \{ x: f_*(x) <1\}.$
%Let 
%\[
%m_f = \inf \{ f(\nu) : \nu\in S^{n-1}\}, \qquad M_f = \sup \{ f(\nu) : \nu \in S^{n-1}\}.
%\]
%It follows that 
%\[
%\frac{1}{M_f} = \inf \{ f_*(x) : x \in S^{n-1} \} , \qquad \frac{1}{m_f} = \sup \{f_*(x) : x \in S^{n-1}\}\,.
%\]
%In particular, we observe that $B_{m_f} \subset K \subset B_{M_f}$.
%

%I suspect that we can show the following.
%\begin{theorem} Let $f$ be a $C^2$ elliptic surface tension and let $0 < \alpha<n-1$. 
%	There exist constants $0< m_2 \leq m_3$ such that 
%\begin{enumerate}
%	\item if $m\leq m_2$, then the problem admits a minimizer.
%	\item if $m>m_3$, then no minimizer exists.
%\end{enumerate}	
%\end{theorem}
%This part is probably not too interesting and should follow the arguments in the anisotropic case. This idea for existence is to a) show that minimizers are essentially bounded and indecomposable, and b) for small mass, we can WLOG take a minimizing sequence to be contained in some large ball (a la \cite[Lemma 5.1]{KM14}). The more interesting question will be to see if we can characterize minimizers for $m$ sufficiently small. For now, let's go ahead and suppose we have a minimizer $E_{\e}$ for each $\e$ sufficiently small. 

%%%%%%%%%%%%%%%%%%%%%%%%%%%%%%%%%%%%%%%%%%%%%%%%%%%%%%%%%%%%%%%%%%
%%%%%%%%%%%%%%%%%%%%%%%%%%%%%%%%%%%%%%%%%%%%%%%%%%%%%%%%%%%%%%%%%%
\section{Preliminaries} \label{sec: prelim}
%%%%%%%%%%%%%%%%%%%%%%%%%%%%%%%%%%%%%%%%%%%%%%%%%%%%%%%%%%%%%%%%%%
%%%%%%%%%%%%%%%%%%%%%%%%%%%%%%%%%%%%%%%%%%%%%%%%%%%%%%%%%%%%%%%%%%

Throughout the paper we denote constants that might change from line to line by $C$ and keep track which parameters these constants depend on in parentheses. For specific constants that reappear elsewhere in the paper we use lower-case letters and denote their dependencies with subscripts such as $c_{n,f}$, $c_{n,\alpha}$.

  \subsection{Basic properties of the surface energy}\label{subsec: wulff}
  
The fact that  $\F$ is uniquely minimized by translations of the {\it Wulff shape $K$ of $f$} defined in (\ref{Wulffshape}) can be restated in a scaling invariant way via the Wulff inequality
\begin{equation}\label{eqn: wulff inequality}
\F(E) \geq n|E|^{(n-1)/n}|K|^{1/n}.
\end{equation}
Throughout, we denote the dilation of the Wulff shape by $K_\rho = \rho K$.
Recall that  $f_*$ denotes the Fenchel dual of $f$ defined by 
\[
f_*(x) = \sup \{ x\cdot \nu : f(\nu )\leq 1\}, 
\]
with the Wulff shape $K$ equivalently expressed as the unit ball for $f_*$. 
Let 
\beqn \label{eqn:l and L}
\ell_f = \inf \{ f(\nu)  \, \big\vert\,  \nu\in \S^{n-1}\}, \qquad L_f = \sup \{ f(\nu)  \, \big\vert\,  \nu \in \S^{n-1}\}.
\eeqn
It follows that
\[
\frac{1}{L_f} = \inf \{ f_*(x)  \, \big\vert\,  x \in \S^{n-1} \} , \qquad \frac{1}{\ell_f} = \sup \{f_*(x)  \, \big\vert\,  x \in \S^{n-1}\}\,.
\]
In particular, we observe that $B_{\ell_f} \subset K \subset B_{L_f}$, where $B_r$ denotes the ball of radius $r$ in $\R^n$. 

 \subsection{Basic properties of the nonlocal energy}
Fix $\alpha \in (0,n)$ and let $v_E:\R^n\to \R$ denote the Riesz potential of $E$ given by
\begin{equation}\label{eqn: vE}
 v_E(x)= \int_{E} \frac{dy}{|x-y|^\alpha }\, .	
\end{equation}
In this way, $\V(E)= \int_E v_E(x) \,dx$.
Taking $r$ such that $\omega_n r^n=|E|$, where $\omega_n$ denotes the volume of the unit ball in $\R^n$, we have
\begin{equation}\label{eqn: bounded}
\| v_E\|_{L^\infty(\R^n)} \leq \| v_{B_r(0)}\|_{L^\infty(\R^n)} = v_{B_r(0)}(0) = \frac{n\om_n}{n-\alpha} r^{n-\alpha}\,.
\end{equation}
Furthermore, for $k=\lfloor n-\alpha \rfloor$ and $\beta\in(0,1)$ with $k+\beta <n-\alpha$, we have
\begin{equation}\label{eqn: Holder}
\|v_E\|_{C^{k,\beta}(\R^n)}\leq C(n,|E|, k, \beta)\,;
\end{equation}
see, e.g. \cite[Lemma 3]{Reichel09},\cite[Proposition 2.1]{BoCr14}.

The functional $\V(E)$ is Lipschitz continuous with respect to the symmetric difference in the sense that there exists $c_{n,\alpha}>0$ such that
\begin{equation}\label{eqn: lip}
|\V(E) - \V(F) | \leq c_{n,\alpha}\, m^{(n-\alpha)/n} |E\triangle F| \qquad \text{ for } |E| \leq |F|=m.
\end{equation}
Indeed, thanks to \eqref{eqn: bounded},
\begin{align*}
\V(E) - \V(F) &= \int_{\R^n} \int_{\R^n} \frac{\chi_E(x)\chi_E(y) - \chi_F(x)\chi_F(y)}{|x-y|^\alpha } \, dx dy\\
& = \int_{\R^n} v_E(y)(\chi_E(y) - \chi_F(y))\, dy + \int_{\R^n} v_F(x)(\chi_E(x) - \chi_F(x))\, dx\\
&\leq \frac{2n\om_n^{\alpha/n}}{n-\alpha} m^{(n-\alpha)/n} |E\triangle F|.
\end{align*}
Hence, \eqref{eqn: lip} holds with
	\beqn\label{eqn:cnalpha}
		c_{n,\alpha}:= \frac{2n\omega_n^{\alpha/n}}{n-\alpha} \, .
	\eeqn

\subsection{Scaling of the energy and  initial energy bounds}
Given a set of finite perimeter $E\subset \R^n$, note that 
 \begin{align*}
 \mathcal{E}_f(r E) 
 %&= r^{n-1} \{ \F(E) + r^{n+1-\alpha}\V(E)\} \\
 &= r^{n-1} \{ \F(E) + r^{n+1-\alpha} \V(E)\}.
 \end{align*}  
 Hence, setting $\e = m^{(n+1-\alpha)/n}$, $E_m$ is a minimizer of \eqref{eqn: aniso drop} with mass $m$ if and only if $E= m^{-1/n}E_m$ is a minimizer of the variational problem
 \begin{equation}\label{eqn: aniso drop 2}
 \inf\Big\{ \mathcal{E}_{\e,f} (E)\  \Big| \ |E| =1\Big\}, \quad
 	\text{where} \quad  \mathcal{E}_{\e, f} (E ) := \F(E) + \e\, \V(E).
 \end{equation}
It will often be convenient to consider minimizers of this rescaled problem in place of \eqref{eqn: aniso drop}. Let us give two initial bounds on the infimum in \eqref{eqn: aniso drop 2}, which here and in the sequel we denote by $\bar \E_{\e,f}$. The first bound is essentially optimal for small $\e$, whereas the second is essentially optimal for large $\e$. 
First, comparing to $K_r$ with $r=|K|^{-1/n}\leq 1/(\ell_f \om_n^{1/n})$, we find that
\begin{equation}\label{eqn: energy bound 1}
\begin{split}
	\bar \E_{\e,f} \leq \E_{\e,f}(K_r)& \leq n|K| r^{n-1} + C(n,\alpha) \e \\
	&\leq C(n,\alpha, \ell_f, L_f),
\end{split}
\end{equation}
where the third inequality holds only when $\e \leq 1.$
Next, fix $N\in \mathbb{N}$, let $\rho = (N|K|)^{-1/n}$, and consider the sequence $\{E_k\}$ given by $E_k = \bigcup_{j=1}^N (K_\rho + kje_1)$, so that
\[
\E_{\e,f}(E_k) = N^{1/n} |K|^{1/n} n + \e c_{n,\alpha} N^{(-n+\alpha)/n} + o_k(1).
\]
Optimizing in $N$ and letting $k\to \infty$, we find that 
\begin{equation}\label{eqn: energy bound 2}
\bar \E_{\e,f} \leq C(n,\alpha, \ell_f, L_f)\, \e^{1/(n+1-\alpha)}\,.
\end{equation}
%Letting $\bar \E_m$ denote the infimum in \eqref{eqn: aniso drop} with mass $m$, this implies that
%\begin{equation}\label{eqn: energy bound 3}
%	\bar \E_m \leq C(n,\alpha, \ell_f, L_f) \max\{ m, m^{(n-1)/n}\}.
%\end{equation}

\subsection{Quasi-minimality of minimizers}\label{subsec: q min} Let us recall two useful notions of sets that almost minimize the surface energy $\F$.
We say that $E$ is a $(\Lambda,r)$-quasi-minimizer of $\F$ if for any  $x\in\R^n$
\begin{equation}\label{eqn: qmin}
\F(E) \leq \F(F)+ \Lambda\,|E\triangle F| \qquad \text{ for all  $F$ with }\ \  F\triangle E  \cc B_r(x)\, .
\end{equation}
We say that $E$ is a $q$-volume-constrained quasi-minimizer if
\[
\F(E) \leq \F(F)+ q\,|E\triangle F| \qquad \text{ for all  $F$ with }\ \  |F|=|E| \,.
\]
Since the potential $\V(E)$ is Lipschitz we can deduce that any minimizer of \eqref{eqn: aniso drop 2} satisfies both of these  quasi-minimality properties:
\blemma \label{lem:qmin} Let $E$ be a minimizer of \eqref{eqn: aniso drop 2}. 
  Then 
  \begin{enumerate}
  	\item $E$ is a $c_{n,\alpha}\e$-volume-constrained quasi-minimizer of $\F$, with $c_{n,\alpha}>0$ given by \eqref{eqn:cnalpha}, and
  	\item $E$ is a $(\Lambda,1)$-quasi-minimizer of $\F$ for some $\Lambda>0$ which depends on $f,n,\alpha$, and $\e$, and is bounded independently of $\e$ for any $\e \leq 1.$
  \end{enumerate}
\elemma
The proof of Lemma~\ref{lem:qmin} is standard, but we include it in Appendix~\ref{app: proof of q min} for the convenience of the reader. A classical argument, see for instance \cite[Theorem 21.11]{Maggi2012}, shows that $(\Lambda,r_0)$-quasi-minimizers satisfy uniform density estimates: provided $\Lambda r_0 /\ell_f \leq 1$,
if $x \in \pa E$ and $r<r_0$, then 
	\begin{equation}\label{eqn: density ests}
c_0 \leq \frac{|E \cap B_r(x)|}{\om_nr^n} \leq 1-c_0 \qquad \text{ with } \quad c_0:=\frac{\ell_f^n}{4^nL_f^n}.
\end{equation}
%
%In view of \eqref{eqn: energy bound 1} and \eqref{eqn: energy bound 2}, we make take
%\begin{eqnarray}
%	\Lambda = C(n,\alpha, L_f)\quad  \text{ and }& r_0 =C(n,\alpha, L_f, \ell_f) &  \text{ when } \e \leq 1\\
%\label{eqn: qmin scale big}	\Lambda = C(n,\alpha, \ell_f)\e  \quad \text{ and } & r_0= C(n,\alpha, L_f, \ell_f)\e^{-1} & \text{ when } \e \geq 1.
%\end{eqnarray}

Recall that a set $E$ is \emph{$\F$-indecomposable} if whenever $E=E_1\cup E_2$ with $E_1, E_2$ disjoint and $\F(E)=\F(E_1) + \F(E_2)$, then $|E_1||E_2|=0$. If a $\F$-indecomposable set $E$ with $|E|\leq 1$ satisfies the lower density estimates of \eqref{eqn: density ests} up to scale $r_0$, then
\[ 
\diam E \leq 2^{n+1} (c_0\om_n)^{-1} \, r_0^{1-n}.
\]
Indeed, let $d =\diam E$ and  take $N = \lfloor d r_0^{-1}\rfloor $ points $\{x_i\}_{i=1}^N \subset \pa E$ such that $\{B_{r_0/2}(x_i)\}_{i=1}^N$ are pairwise disjoint. Then
\[
1 = |E| \geq \sum_{i=1}^N |E\cap B_{r_0/2}(x_i)| \geq 2^{-n} N c_0 \om_n r_0^n \geq  2^{-n-1} c_0 \om_n d r_0^{n-1}.
\]  

Observe that minimizer $E$ of \eqref{eqn: aniso drop 2} is indecomposable. Suppose $E=E_1 \cup E_2$ for disjoint sets $E_1, E_2$ with $\F(E)=\F(E_1) + \F(E_2)$. Applying the diameter bound to each $\F$ indecomposable component of $E_1, E_2,$ we find that $E_1$ and $E_2 + ke_1$ are disjoint for $k$ sufficiently large, so taking $G = E_1 \cup( E_2 + ke_1)$, we have $\E_{\e,f}(G)\leq\E_{\e,f}(E)$, with strict inequality unless $|E_1||E_2|=0$.

Furthermore, note that for minimizers $E_\e\subset \R^n$ of the rescaled problem \eqref{eqn: aniso drop 2}, using the estimate \eqref{eqn: lip}, we have that $\F(E_\e) \leq \F(K) + C(n,\alpha)\,\eps$. This implies that $E_\e \to K$ in $L^1(\R^n)$ (up to translation). Furthermore, the density estimates \eqref{eqn: density ests} paired with the $L^1$ convergence yield $d_H(\pa E_\e, \pa K)\to 0$ as $\e \to 0$, where $d_H$ denotes the Hausdorff distance.

\subsection{Regularity of minimizers}
Suppose $f$ is a smooth elliptic surface tension. The first variation of $\F(E)$ with respect to a variation generated by $X\in C^{1}_c(\R^n,\R^n)$ is given by 
\[
\delta \F(E)[X] = \int_{\pa^*E} \DIVV^{\pa^* E}\big(\na f\circ \nu_E\big) X\cdot\nu_E \, d\H^{n-1}\,
\]
where $\Div^{\pa^* E}$ denotes the tangential divergence along $\pa^* E$, and $\pt^*$ denotes the reduced boundary. We define $H_E^f:\pa^* E\to \R$ by $H_E^f= \DIVV^{\pa^* E}\big(\na f\circ \nu_E\big)$. Often $H_E^f$ is called anisotropic mean curvature in analogy with the isotropic setting.
The first variation of $\mathcal{V}(E)$ with respect to a variation generated by $X\in C^{1}_c(\R^n,\R^n)$ is given by 
\[
\delta \V(E)[X] = \int_{\pa^*E} v_E(x) X\cdot\nu_E \, d\Hi^{n-1}\,,
\]
with $v_E(x)$ as defined in \eqref{eqn: vE}.

We say that a set $E$ is a  critical point of \eqref{eqn: aniso drop} if $\delta(\F(E) + \V(E))[X]=0$ for all variations with $\int_{\pa^*E} X\cdot \nu_E\, d \Hi^{n-1}=0$, i.e. variations that preserve volume to first order. Hence, a volume-constrained critical point $E$ of \eqref{eqn: aniso drop} satisfies the Euler-Lagrange equation
\begin{equation}\label{eqn: EL}
H_E^f(x) + v_E(x) = \mu  \qquad \text{ for }x \in \pa^* E\,.
\end{equation}
Here, $\mu$ is a Lagrange multiplier coming from the volume constraint.

We have the following regularity properties of minimizers of \eqref{eqn: aniso drop}. 

\begin{theorem}\label{thm: regularity}
  Fix $n\geq 2$ and $\alpha \in (0,n)$. Suppose 
  %$f\in C^{2,\tilde{\beta}}(\R^n\setminus\{0\})$ is an}{
  $f$ is a smooth elliptic surface tension and let $E$ be a minimizer of \eqref{eqn: aniso drop} for mass $m$. 
  \begin{myenumerate}
  	\item The reduced boundary $\pa^*E$ is a $C^{2,\beta}$ hypersurface for all $\beta<\beta_0:=\min\{1,n-\alpha\}$.
  	\end{myenumerate}
  	Furthermore, there exist $m_4$ and $m_5$ depending on $n,f,$ and $\alpha$ with $m_4 \geq m_5$ such that the following statements hold. 
  	\begin{myenumerate}
  	\item If $m \leq m_4$, then $\pa E$ is a $C^{2,\beta}$ hypersurface for all $\beta \in (0,\beta_0)$, and in fact can locally be written as a small $C^{2,\beta}$ graph over the boundary of the Wulff shape of mass $m$. 
  	\item If $m \leq m_5$, then $E$ is uniformly convex.
  \end{myenumerate} 
  \end{theorem}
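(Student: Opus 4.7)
The plan is to carry out a two-stage bootstrap: first extract $C^{1,\gamma}$ regularity of the reduced boundary from quasi-minimality, then use the Euler--Lagrange equation \eqref{eqn: EL} together with the H\"older regularity of $v_E$ from \eqref{eqn: Holder} to upgrade to $C^{2,\beta}$. For parts (ii) and (iii) I would combine this local regularity with the Hausdorff convergence $d_H(\pa E_\e,\pa K)\to 0$ of rescaled minimizers to the Wulff shape, noted at the end of Subsection~\ref{subsec: q min}, to promote local regularity into a global graph statement and then into convexity.

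For part (i), Lemma~\ref{lem:qmin} gives that $E$ is a $(\Lambda,1)$-quasi-minimizer of $\F$. Since $f$ is a smooth elliptic surface tension, the regularity theory for quasi-minimizers of anisotropic perimeters (in the spirit of Almgren, with extensions by Schoen--Simon--Almgren and Duzaar--Steffen) yields that $\pa^*E$ is a $C^{1,\gamma}$ hypersurface for some $\gamma\in(0,1)$. On $\pa^*E$ the Euler--Lagrange equation \eqref{eqn: EL} reads $H_E^f = \mu - v_E$. Parametrising $\pa^*E$ locally as the graph of a $C^{1,\gamma}$ function $u$ over a tangent hyperplane, the anisotropic mean curvature becomes a quasilinear elliptic operator $Q(u)$, uniformly elliptic and smooth in $\na u$ by the smooth ellipticity of $f$. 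The right-hand side $\mu - v_E$ is $C^{0,\beta}$ for every $\beta<\beta_0=\min\{1,n-\alpha\}$ by \eqref{eqn: Holder}. A standard Schauder bootstrap for quasilinear equations (Gilbarg--Trudinger, Lieberman) then gives $u\in C^{2,\beta}$ for every $\beta<\beta_0$.

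For part (ii), Hausdorff convergence $d_H(\pa E_\e,\pa K)\to 0$ combined with the uniform density estimates \eqref{eqn: density ests} and the smoothness of $\pa K$ allows one to cover $\pa K$ by small balls in which the cylindrical excess of $\pa E_\e$ relative to the tangent hyperplane of $K$ is arbitrarily small for $\e$ small. Anisotropic $\e$-regularity for $(\Lambda,1)$-quasi-minimizers then guarantees that on each such ball $\pa E_\e$ is a small $C^{1,\gamma}$ graph over the tangent plane; the smoothness of $\pa K$ lets one patch these together into a single small $C^{1,\gamma}$ graph of $\pa E_\e$ over $\pa K$. Bootstrapping this graph parametrisation as in Part (i) upgrades it to a small $C^{2,\beta}$ graph for every $\beta<\beta_0$. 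Undoing the rescaling gives (ii), with $m_4$ dictated by the $\e$-regularity threshold.

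For part (iii), the $C^{2,\beta}$ graph functions $u_\e$ produced in Part (ii) are uniformly bounded in the $C^{2,\beta}$ norm (with Schauder constants depending only on $n,f,\alpha$ and the energy bound \eqref{eqn: energy bound 1}). Uniform Hausdorff convergence $u_\e \to 0$, together with this uniform $C^{2,\beta}$ bound, yields $u_\e\to 0$ in $C^{2,\beta'}$ for every $\beta'<\beta$ via interpolation and Arzel\`a--Ascoli. Since $K$ is uniformly convex (a direct consequence of the smooth ellipticity of $f$), a sufficiently small $C^2$ perturbation of $\pa K$ remains uniformly convex, so $E$ is uniformly convex once $m\leq m_5$. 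The main obstacle I anticipate is carrying out the Schauder bootstrap cleanly when $\alpha$ is close to $n$: the H\"older exponent of $v_E$ becomes small and one must track a borderline regularity through the quasilinear estimates; quantifying the $\e$-regularity thresholds of Part (ii) sharply enough to pin down $m_4$ and $m_5$ explicitly is the other delicate point.
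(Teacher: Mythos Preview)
Your proposal is correct and follows essentially the same approach as the paper: quasi-minimality from Lemma~\ref{lem:qmin} yields $C^{1,\gamma}$ regularity via the $\varepsilon$-regularity theory for anisotropic quasi-minimizers, the Euler--Lagrange equation together with \eqref{eqn: Holder} and Schauder estimates upgrades this to $C^{2,\beta}$, and for small mass the Hausdorff convergence of rescaled minimizers to $\pa K$ (with uniform excess smallness) produces uniform $C^{2,\beta}$ graphs over $\pa K$ whose $C^{2,\beta'}$ convergence to zero (via Arzel\`a--Ascoli) gives uniform convexity. The only point you leave slightly implicit that the paper makes explicit is the uniform bound on the Lagrange multiplier $\mu_\e$ needed to control the right-hand side of the Schauder estimate uniformly in $\e$; this follows from the energy bound \eqref{eqn: energy bound 1}.
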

   Theorem~\ref{thm: regularity} can be deduced from known arguments and regularity results in the literature. We outline the proof in Appendix~\ref{appendix: regularity} for completeness, and following the proof we make several remarks about sharper forms of Theorem~\ref{thm: regularity} that can be proven but are not needed in this paper.

 Theorem~\ref{thm: regularity} can be deduced from known arguments and regularity results for quasi-minimizers in the literature (see e.g. \cite{alm66, SSA77, bomb82, DuzaarSteffen02}). Likewise sharper forms of Theorem~\ref{thm: regularity} can be proven but are not needed in this paper. However, we remark on these extensions here for future reference.

\section{Existence and nonexistence of minimizers} \label{sec:exist_nonexist}
%%%%%%%%%%%%%%%%%%%%%%%%%%%%%%%%%%%%%%%%%%%%%%%%%%%%%%%%%%%%%%%%%
%%%%%%%%%%%%%%%%%%%%%%%%%%%%%%%%%%%%%%%%%%%%%%%%%%%%%%%%%%%%%%%%%

In this section we prove that the energy $\varE_f$ admits a minimizer when $m$ is sufficiently small, while no minimizer exists when $m$ is sufficiently large.
\begin{theorem}\label{thm:exist_nonexist} Let $n \geq 2$, and let $f$ be a surface tension with $\ell_f$ and $L_f$ given by \eqref{eqn:l and L}.
\begin{enumerate} \addtolength{\itemsep}{6pt}
	\item For all $\alpha \in (0,n)$ there exists $m_1=m_1(\alpha,n, \ell_f, L_f)>0$ such that for all $m \leq m_1$, the variational problem \eqref{eqn: aniso drop} admits an essentially bounded and $\F$-indecomposable minimizer $E \subset \R^n$.
	\item For all $\alpha \in (0,2)$, there exists $m_0 = m_0 (\alpha, n, \ell_f, L_f)$ such that for all $m>m_0$, no minimizer exists in \eqref{eqn: aniso drop}.	
\end{enumerate}
\end{theorem}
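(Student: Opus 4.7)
The plan is to adapt the concentration-compactness and splitting techniques developed for the isotropic nonlocal isoperimetric problem (cf. \cite{KM13,KM14,LO14,FFMMM}) to the anisotropic setting \eqref{eqn: aniso drop}. Throughout I work in the rescaled formulation \eqref{eqn: aniso drop 2}, so the mass is normalized to one and the small/large mass regimes correspond to small/large $\e$.

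For part (1), I will use the direct method together with a concentration argument. Fix $\e$ small and take a minimizing sequence $\{E_k\}$ for $\bar\E_{\e,f}$. The upper bound \eqref{eqn: energy bound 1} yields $\F(E_k)\leq C$ independently of $k$, and Lemma~\ref{lem:qmin} says each $E_k$ is a $(\L,1)$-quasi-minimizer of $\F$ with $\L$ bounded uniformly for $\e\leq 1$, so the density estimates \eqref{eqn: density ests} apply. Splitting each $E_k$ into its $\F$-indecomposable components $\{E_k^{(j)}\}$, the diameter bound in Section~\ref{subsec: q min} guarantees that each component has diameter bounded by a universal constant. A standard translation argument, combined with $BV$-compactness and the density estimates (which prevent mass from vanishing), produces a subsequential $L^1_{\loc}$ limit and a generalized minimizer $E_\infty=\bigsqcup_j \tilde E^{(j)}$ with $\sum_j |\tilde E^{(j)}|=1$, with lower semicontinuity of $\E_{\e,f}$. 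The remaining step is to rule out genuine splitting for small $\e$: if $E_\infty$ had two components of masses $m_1,m_2>0$ with $m_1+m_2=1$, the strict concavity of $s\mapsto s^{(n-1)/n}$ together with the Wulff inequality would yield
\[
\F(\tilde E^{(1)})+\F(\tilde E^{(2)})\geq n|K|^{1/n}\bigl(m_1^{(n-1)/n}+m_2^{(n-1)/n}\bigr)>n|K|^{1/n},
\]
while the corresponding gain in $\V$ from separating the pieces is of order $\e$ by \eqref{eqn: lip} and \eqref{eqn: bounded}, which is negligible when $\e$ is sufficiently small. Thus the generalized minimizer must be a single $\F$-indecomposable component, giving the desired essentially bounded minimizer.

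For part (2), the strategy is to contradict existence via a splitting comparison. Suppose, for contradiction, that $E$ is a minimizer of \eqref{eqn: aniso drop} of mass $m$ with $\alpha\in(0,2)$. Unwinding the rescaling, \eqref{eqn: energy bound 2} translates to $\E_f(E)\leq Cm$, while the Wulff inequality gives $\F(E)\geq n|K|^{1/n}m^{(n-1)/n}$, which is of lower order than $Cm$ for large $m$; consequently $\V(E)\leq Cm$. On the other hand, comparing with $N$ disjoint translates of the dilated Wulff shape $K_{\rho_N}$ of mass $m/N$ sent far apart gives
\[
\bar\E_f(m)\leq N^{1/n}n|K|^{1/n}m^{(n-1)/n}+C\,N^{(\alpha-n)/n}m^{(2n-\alpha)/n},
\]
which upon optimizing in $N$ reproduces the linear bound. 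The goal is to show that no single bounded set of mass $m$ can match this scaling, by first establishing that any candidate minimizer satisfies $\diam E\leq C\,m^{1/n}$; this forces $\V(E)\geq c\,m^{(2n-\alpha)/n}$ and yields $m^{(n-\alpha)/n}\leq C$, which fails for $m$ large since $\alpha<n$. Equivalently, one exhibits $N\geq 2$ such that the $N$-piece configuration has strictly lower energy than any single bounded set, so the infimum is only approached by sequences spreading to infinity and is not attained.

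The main obstacle is establishing the diameter bound in part (2), since the density estimates from Section~\ref{subsec: q min} degrade as $\e$ (hence $m$) grows and do not suffice in the large-mass regime. I expect to handle this via a Kn\"upfer--Muratov--style truncation and slicing lemma: the bound on $\F(E)$ and Fubini's theorem produce an anisotropic hyperplane cut with small cross-section; sliding one side to infinity then costs a small amount of $\F$, bounded by the gain in $\V$. This is precisely where the restriction $\alpha\in(0,2)$ is used, controlling the decay rate of the cross-interaction $\int\int_{E^+\times E^-}|x-y|^{-\alpha}\,dx\,dy$ so that the rearranged configuration has strictly smaller total energy than $\E_f(E)$. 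The adaptation to the anisotropic functional $\F$ amounts to replacing balls by dilates of the Wulff shape $K$ as reference sets, but the underlying scaling picture and the subadditivity $\bar\E_f(m)\leq \bar\E_f(m')+\bar\E_f(m-m')$ (obtained by translating pieces to infinity) transfer from the isotropic setting without essential change.
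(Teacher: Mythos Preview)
Your outline contains two genuine gaps, one in each part.

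\textbf{Part (1).} You invoke Lemma~\ref{lem:qmin} and the density estimates \eqref{eqn: density ests} for the elements $E_k$ of a minimizing sequence, but Lemma~\ref{lem:qmin} is stated and proved only for \emph{minimizers} of \eqref{eqn: aniso drop 2}; an arbitrary competitor in a minimizing sequence is not a quasi-minimizer of $\F$, so neither the density estimates nor the diameter bound of Section~\ref{subsec: q min} are available for it. Without these, your decomposition into $\F$-indecomposable pieces of uniformly bounded diameter does not go through. The paper sidesteps this issue entirely: rather than seeking structure on the $F_k$ themselves, it applies the non-optimality criterion (Lemmas~\ref{lem: nonopt} and~\ref{lem: non opt satisfied}) to \emph{replace} each $F_k$ by a truncation $G_k = r_k(F_k\cap B_{\rho_k})$ of no larger energy, all contained in a fixed ball $B_{R_0}$, and then runs the direct method. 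This truncation step requires only that $\E_{\e,f}(F_k)$ be close to the infimum, not quasi-minimality.

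\textbf{Part (2).} The step ``establishing that any candidate minimizer satisfies $\diam E\leq C\,m^{1/n}$'' is where the argument breaks. The diameter bound one can actually prove for minimizers of \eqref{eqn: aniso drop} in the large-mass regime is $\diam E\leq C\,m$ (this is \eqref{eqn: diameter bound}, obtained via the improved density estimates of Kn\"upfer--Muratov), not $Cm^{1/n}$. With the correct bound $\diam E\leq Cm$, your crude lower bound $\V(E)\geq (\diam E)^{-\alpha}m^{2}\geq c\,m^{2-\alpha}$ combined with $\V(E)\leq Cm$ only yields $m^{1-\alpha}\leq C$, which gives a contradiction for $\alpha<1$ but says nothing for $\alpha\in[1,2)$. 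The paper closes this gap with the Frank--Killip--Nam device: starting from the minimality inequality $\E_f(E)\leq \E_f(E_{\nu,t}^+)+\E_f(E_{\nu,t}^-)$ for every hyperplane cut and integrating over all $t\in\R$ and $\nu\in\S^{n-1}$, one obtains
\[
2\omega_{n-1}\int_E\int_E |x-y|^{1-\alpha}\,dx\,dy\;\leq\;2\,\F(B_1)\,m\,,
\]
which effectively shifts the Riesz exponent from $\alpha$ to $\alpha-1$. For $\alpha\in(0,1]$ the left-hand side is bounded below by $C\,m^{2-(\alpha-1)/n}$ and one concludes directly; for $\alpha\in(1,2)$ it is bounded below by $2\omega_{n-1}(\diam E)^{1-\alpha}m^2$, and \emph{now} pairing with $\diam E\leq Cm$ gives $m^{1/(\alpha-1)}\leq C\,\diam E\leq Cm$, hence $m^{(2-\alpha)/(\alpha-1)}\leq C$. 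Your slicing/truncation heuristic in the final paragraph gestures at hyperplane cuts but does not capture this integration-over-hyperplanes step, which is the essential idea missing from your sketch.
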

It is not known whether minimizers of \eqref{eqn: aniso drop} exist for large masses, even in the isotropic case, for $\alpha \in [2,n).$ We prove Theorem~\ref{thm:exist_nonexist}(i) in the same way it was shown in the isotropic case in \cite[Theorem 2.2]{KM13} and \cite[Theorem 3.1]{KM14}; the details are given in Appendix~\ref{app: existence}.  The proof of Theorem~\ref{thm:exist_nonexist}(ii) combines an elegant argument of \cite{FrKiNa2016} with the diameter bound 
\begin{equation}\label{eqn: diameter bound}
	\diam E \leq C(n,\alpha, \ell_f,L_f)\, m
\end{equation}
for minimizer of \eqref{eqn: aniso drop} with mass $m$. This diameter estimate, shown in Appendix~\ref{app: existence}, was originally established in \cite[Lemma 7.2]{KM14} by showing that the lower density estimates of \eqref{eqn: density ests} hold up to an improved scale.

\medskip

\begin{proof}[Proof of Theorem \ref{thm:exist_nonexist}(i)]
It is equivalent to show that there is $\e_1 = \e_1(n,f,\alpha) \leq 1$ such that a minimizer exists for the rescaled problem \eqref{eqn: aniso drop 2} for $\e \leq \e_1$. Let $\{F_k\}$ be a minimizing sequence for \eqref{eqn: aniso drop 2} with $|F_k|=1$. Lemmas \ref{lem: nonopt} and \ref{lem: non opt satisfied} show that there exists $\bar \rho$ such that for each $k$, there exists $\rho_k \leq \bar \rho$ satifsying
\[
\E_{\e, f}(G_k)\leq \E_{\e, f}(F_k),
\]
where $G_k$ is a dilation of $F_k\cap B_{\rho_k}$ with $|G_k|=1$ and $G_k\subset B_{R_0}(0)$ with $R_0$ depending on $n,\alpha, \ell_f$, and $L_f$. Such a sequence, having $\F(G_k)\leq C$, is compact in the $L^1$ topology, so up to a subsequence, $G_k\to E$ in $L^1$ with $|E|=1$. The energy $\E_{\e,f}$ is lower semi-continuous with respect to $L^1$ convergence, so $E$ is a minimizer of \eqref{eqn: aniso drop 2}. The boundedness and indecomposability of a minimizer follow from the remarks in Section~\ref{subsec: q min}.
\end{proof}

\medskip

Before proving Theorem \ref{thm:exist_nonexist}(ii), let us fix some notation.
For fixed $t \in \R$ and $\nu\in \S^{n-1}$, we define the hyperplane
\[
H_{\nu, t} = \{ x\in\R^n  \, \big\vert\,  x\cdot \nu = t\}
\]
and the corresponding half-spaces
\[
H_{\nu, t}^{+} = \{ x\in\R^n  \, \big\vert\,  x\cdot \nu > t\}, \qquad H_{\nu, t}^{-} = \{ x\in\R^n  \, \big\vert\,  x\cdot \nu < t\}.
\]
For a given set $E$, we let $E_{\nu,t}^{\pm} = E \cap H_{\nu, t}^{\pm}$.

\medskip

\begin{proof}[Proof of Theorem \ref{thm:exist_nonexist}(ii)]
Suppose $E$ is a minimizer of \eqref{eqn: aniso drop} for mass $m$. For fixed $t \in \R$ and $\nu \in \S^{n-1}$, 
arguing as we did to show indecomposability in Section~\ref{subsec: q min}, 
we have
%let $m_{\pm} = | E_{\nu,t}^{\pm}|$. Then 
\begin{equation}\label{eqn: minimality}
\E_f(E ) 
% e_m \leq e_{m_+} + e_{m_-}
 \leq \E_f(E_{\nu,t}^{+}) + \E_f(E_{\nu,t}^{-})\,.
\end{equation}
Next, recall that for any set of finite perimeter $E$ and $\nu \in \S^{n-1}$, we have 
	\[
	\F(E_{\nu,t}^+ ) + \F(E_{\nu, t}^-) = \F(E) + \{f(\nu) + f(-\nu)\} \Hi^{n-1}(E \cap H_{\nu, t})
	\]
	for a.e. $t \in \R$ (see \cite[Theorem 16.3 and Proposition 2.16]{Maggi2012}). 
So, rearranging \eqref{eqn: minimality} yields
\begin{equation}\label{eqn: compare1}
\V(E) - \V(E_{\nu,t}^+) - \V(E_{\nu, t}^-) \leq  \{f(\nu) + f(-\nu)\} \Hi^{n-1}(E \cap H_{\nu, t})
\end{equation}
for a.e. $t$.  We will integrate both sides of \eqref{eqn: compare1} with respect to $t \in \R$ and $\nu \in \S^{n-1}$. Integrating the right-hand side yields $2\F(B_1) m$. For the left-hand side, we observe that
\[
		\V(E) - \V(E_{\nu,t}^+) - \V(E_{\nu, t}^-) = 2 \int_{\R^n} \int_{\R^n} \frac{\chi_{E_{\nu,t}^+}(x) \chi_{E_{\nu,t}^-}(y)}{|x-y|^{\alpha}} \, dx dy
\]
and that
\[
\chi_{E_{\nu,t}^+}(x) \chi_{E_{\nu,t}^-}(y) = \chi_{\{x\cdot \nu > t > y\cdot \nu\}} (x,y) \chi_E(x) \chi_E(y).
\]
By the layer cake formula, 
\[
\int_{-\infty}^\infty \chi_{\{x\cdot \nu > t > y\cdot \nu\}} \, dt = \int_{-\infty}^{\infty} \chi_{\{(x-y)\cdot \nu > s > 0\} }(x,y) \, ds = [(x-y)\cdot\nu]_+\, .
\] 
So, by Fubini's theorem, integrating the left-hand side of \eqref{eqn: compare1} with respect to $t \in \R$ gives us
\begin{equation}\label{eqn: int over ell}
2\int_E \int_E \frac{[(x-y)\cdot\nu]_+}{|x-y|^{\alpha}} \, dx dy 
\end{equation}
We now integrate \eqref{eqn: int over ell} over $\nu \in \S^{n-1}$. Note that 
\begin{align*}
\int_{\S^{n-1}} [a\cdot\nu]_+ \,d\Hn &= |a| \int_{\S^{n-1}} [a/|a| \cdot \nu]_+ \,d\Hn= |a| \int_0^{\pi/2} \cos(\theta) \H^{n-2}(\S^{n-2}_{\theta}) \, d\theta \\&= (n-1)\om_{n-1}|a| \int_0^{\pi/2} \cos(\theta)\sin^{n-2}(\theta) \, d\theta = |a| \om_{n-1}.
\end{align*}
So, again using Fubini's theorem, integrating both sides of \eqref{eqn: compare1} over $t \in \R$ and $\nu \in \S^{n-1}$ yields
\beqn \label{eqn: nonlocal vs per}
2\om_{n-1} \int_E \int_{E} \frac{1}{|x-y|^{\alpha-1} }\,dx dy \leq 2\F(B_1) m.
\eeqn

When $\alpha \in (0,1],$ the left-hand side is minimized by $B_{(m/\omega_n)^{1/n}}$ and hence is bounded below by $C(n,\alpha)\,m^{2 - (\alpha-1)/n}$ for some constant $C(n,\alpha)>0$. It follows that  the existence of a minimizer implies that
\begin{equation}\label{eqn: mass bound}
m \leq C(n,\alpha,\ell_f, L_f).
\end{equation}
On the other hand, when $\alpha \in (1,2)$, the left-hand side of \eqref{eqn: nonlocal vs per} is bounded below by $2\om_{n-1}(\diam E)^{1-\alpha} m^2$. Rearranging this gives us the diameter lower bound
\[
m^{1/(\alpha-1)} \leq C(n,\alpha, \ell_f, L_f) \diam E
\]
 Pairing this with \eqref{eqn: diameter bound}, we establish \eqref{eqn: mass bound} in this case as well.
\end{proof}

%%%%%%%%%%%%%%%%%%%%%%%%%%%%%%%%%%%%%%%%%%%%%%%%%%%%%%%%%%%%%%%%%
%%%%%%%%%%%%%%%%%%%%%%%%%%%%%%%%%%%%%%%%%%%%%%%%%%%%%%%%%%%%%%%%%
\section{Smooth elliptic surface tensions: the Proof of Theorems \ref{thm: not wulff} and \ref{thm: Riesz char}}\label{sec:smooth_aniso}
%%%%%%%%%%%%%%%%%%%%%%%%%%%%%%%%%%%%%%%%%%%%%%%%%%%%%%%%%%%%%%%%%
%%%%%%%%%%%%%%%%%%%%%%%%%%%%%%%%%%%%%%%%%%%%%%%%%%%%%%%%%%%%%%%%%

In this section, we establish Theorem~\ref{thm: not wulff}. The main tool is Theorem \ref{thm: Riesz char}  
 which states that, under suitable regularity assumptions, balls are the {\it only} bounded sets for which the first variation of the nonlocal energy $\V(E)$ is constant. We first prove  Theorem \ref{thm: Riesz char} which, as described in Remark \ref{remark-0}, which extends previous results for $0< \alpha < n-1$ to cases where the Riesz potential lacks $C^1$ regularity. 
   
 \begin{proof}[Proof of Theorem~\ref{thm: Riesz char}]

Let us introduce some notation. For simplicity, we let $v = v_E$ throughout the proof.  Following the notation in Section~\ref{sec:exist_nonexist}, for any $t\in \R$, let 
\[
H_t = \{ x\cdot e_1 = t\}, \qquad H_t^- = \{ x\cdot e_1 < t\}, \qquad H_t^+ = \{x\cdot e_1>t\}.
\]
Let
\[
\Sigma_t= E\cap H_t^-
\]
and, defining $x^t = (2t-x_1, x_2,\dots, x_n)$ to be the reflection of $x$ across $H_t$, let 
\[
 \Sigma_t' = \{ x^t  \, \big\vert\,  x \in \Sigma_{t}\}.
\]
Finally, let  $v_t(x) = v(x^t)$ and set $G_t = E \setminus \overline{(\Sigma_t \cup \Sigma_t')}.$ Observe that $G_t\subset H_t^+$.\\

Since $E$ is bounded, $\Sigma_t$ is empty for $t$ sufficiently small and contains $E$ for $t$ sufficiently large. This means that 
 \[
\lambda = \sup \{ t  \, \big\vert\,  \Sigma_t' \subset E\}
\]
is finite. Furthermore, we have either 
	\smallskip
\begin{enumerate}[\Case 1.] 
	\item $\pa \Sigma_\lambda'$ is tangent to $\pa E$ for some point $\bar x \in \pa E \setminus H_\lambda$, or
	\item $H_\lambda$ is orthogonal to the tangent plane of $\pa E$ at some point $\bar x \in \pa E \cap  H_\lambda$.
\end{enumerate}
	\smallskip
We will show that in either case, $G_\lambda$ is empty and thus $E$ is symmetric across $H_\lambda$. 

\bigskip

For any $x \in \R^n$ we may write
\begin{align*}
v(x) & = \int_{\Sigma_\lambda} |x-y|^{-\alpha} \, dy + \int_{\Sigma_\lambda'} |x-y|^{-\alpha} \, dy + \int_{G_\lambda} |x-y|^{-\alpha}\,dy,\\
v_\lambda(x) & =  \int_{\Sigma_\lambda} |x-y|^{-\alpha} \, dy+ \int_{\Sigma_\lambda'} |x-y|^{-\alpha} \, dy + \int_{G_\lambda} |x^\lambda-y|^{-\alpha}\,dy
\end{align*}
and therefore we have 
\begin{equation}\label{eqn: reflection}
 v_\lambda(x) -v(x) = \int_{G_\lambda} \Big( |x^\lambda-y|^{-\alpha} - |x-y|^{-\alpha} \Big)\, dy.
\end{equation}
In particular, for any $x \in H_\lambda^{-}$, the right-hand side is strictly positive unless $G_\lambda$ is empty.

\bigskip

Let us first suppose that Case 1 occurs. Since $\bar x \in \pa \Sigma_\lambda' \cap \pa E$ and $\bar x \not\in H_\lambda$, it follows from construction that $\bar x $ is the reflection $x^\lambda$ for some $x\in \pa E \cap \pa \Sigma_\lambda.$ Since $x$ and $\bar x^\lambda$ both lie in $\pa E$, we have $v(\bar x) = v_\lambda(\bar x) = $ const. Hence \eqref{eqn: reflection} implies that $G_\lambda$ is empty and so $E$ is symmetric about $H_\lambda.$

\bigskip

Next, suppose that Case 2 occurs. Note that $e_1$ is parallel to the tangent plane of $\pa E$ at $\bar x$. Hence, up to a translation and a rotation of $E$ that fixes its orientation with respect to $e_1$, we may assume that $\bar x=0$ and the tangent plane to $\pa E$ at zero is the plane $\{x_n=0\}$. Moreover, we may locally express $\pa E$ as the graph over its tangent plane. More specifically, letting 
\[
x = (x',x_n) \quad \text{ and }  \quad B_r'(0)=\{|x'|<r\}\subset \R^{n-1},
\]
for $r$ suitably small, we may find a $C^{k,\gamma}$ function $\varphi: B_r'(0)\to \R$:
\begin{equation}\label{eqn: graph}	
\pa E \cap B_r(0)  = \left\{ \left(x', \varphi(x')\right): x' \in B_r'(0)\right\}.
\end{equation}

If $\alpha \in (0,n-1)$, we can argue exactly as in \cite{LuZhu12}. In this case, $v$ is differentiable (recall \eqref{eqn: Holder}) and thus $\pa_{e_1} v =0$. In particular, for $h>0$ small, we have 
\begin{equation}
	\label{eqn: claim0}
	%\replaceR{|v(\bar x+ h e_1) - v(\bar x- h e_1)| \leq C h^2}
	{|v(\bar x+ h e_1) - v(\bar x- h e_1)| =  o(h).}
\end{equation}

\medskip

 Now suppose that $\alpha \in [n-1, n-1/3)$. For $h>0$ small, let $\varphi_h = \varphi( -h e_1)$
and consider the sequence $\{x_h\} \subset \pa E \cap H_\lambda^-$ defined by
\[
x_h = -h e_1 + \vphi_h e_n .
\]
 In this way, 
$x_h^\lambda = h e_1 + \vphi_h e_n \in H_\lambda^+$.
We claim that 
\begin{equation}\label{eqn: claim1}
|v(x_h^\lambda) - v(x_h) | \leq Ch^{1+\eta}
\end{equation}
for some $\eta>0$. Indeed, recall from \eqref{eqn: Holder} that for any $\beta < \min\{ n-\alpha, 1\}$, we have $\|v\|_{C^{0, \beta}} \leq C$.  So, as $v(x) = \bar{c}$ for some constant $\bar{c}\in\R$ for all $x \in \pa E$,
\[
|v(x_h^\lambda) - v(x_h) | = |v(x_h^\lambda ) - \bar{c} | \leq C \text{dist}\big(x_h^\lambda ,\, \pa E\big)^\beta.	
\]
In order to estimate the right-hand side, we argue separately when $\alpha \in [n-1, n-1/2)$ and when $\alpha \in [n-1/2, n-1/3)$.

First, suppose that $\alpha \in [n-1, n-1/2)$. In this case, by the hypotheses, $\pa E \in C^{1,\gamma}$ for $1+\g>1/(n-\alpha)$. Consider a Taylor expansion of $\vphi$: since $\vphi(0) = |\nabla\vphi(0)|=0$, we have 
\[
\vphi(x' ) = O\big(|x'|^{1+\gamma}\big).
\]
Let  $y_h=he_1 + \vphi(h e_1) e_n \in \pa E$ by \eqref{eqn: graph}. Hence,
\begin{align*}
\text{dist}\big(x_h^\lambda,\, \pa E\big) \leq \big|x_h^\lambda - y_h\big|	 &= |\vphi(-h e_1) - \vphi (he_1)|\\
& \leq |\vphi(-h e_1)| +|\vphi(he_1)| \leq  Ch^{1+\gamma}.
\end{align*}

%\replaceR{Taking $\beta$ large enough so that $(1+\gamma)\beta>1$,}
{Since $1+\g>1/(n-\alpha)$, we may take $\beta<n-\alpha$ large enough such that $(1+\gamma)\beta>1$.} This yields \eqref{eqn: claim1} in this case. 

Next, suppose that $\alpha \in [n-1/2, n-1/3)$. Then, by assumption $\pa E \in C^{2,\gamma}$ {with $2+\gamma>1/(n-\alpha)$}. Now, we use the fact that in the $e_1$ direction, $\pa E$ separates from its reflection as distance to the power $2+\gamma$.
Indeed, a Taylor expansion of $\vphi$ in this case yields
\[
\vphi(x' ) = D^2  \vphi(0)[x',x'] + O\big(|x'|^{2+\gamma}\big).
\]
In this way, again letting $y_h = h e_1 + \vphi(h e_1) e_n$, we see that 
\[
\text{dist}\big(x_h^\lambda,\, \pa E\big) \leq \big|x_h^\lambda - y_h\big|	= |\vphi(h e_1) - \vphi( - h e_1)| \leq  Ch^{2+\gamma}.
\]
Note that in general $\pa E$ separates only quadratically from its reflection across $H_\lambda$; the key point here is that we have chosen our sequence so the $x'$ components of the sequence and its reflection correspond to the reflection $x'=-x'$.
Again, 
%\replaceR{we may choose $\beta$ large enough so that}
{since $2+\gamma>1/(n-\alpha)$, we may choose $\beta<n-\alpha$ large enough so that} $(2+\gamma)\beta>1$, proving \eqref{eqn: claim1}.

%Thus, we have 
%\begin{equation}
%	\label{eqn: 3beta}
%	\text{dist}(x_h^\lambda \pa E)^\beta \leq |x_h^\lambda - x_h|^\beta \leq Ch^{3\beta}.
%\end{equation}
%On the other hand, consider the sequence $\{ x_h\} \subset H^-_\lambda$ defined by
%\begin{align*}
%	x_h & = \bar x - he_1 
%\end{align*} 
%and note that 
%\[
%\pa_{e_1} v(\bar x) = \lim_{h\to 0} \frac{v_\lambda(x_h) - v(x_h)}{2h}.
%\]

\medskip

Next, we claim that if $G_\lambda$ is nonempty, we have
\begin{equation}\label{eqn: claim2}
|v(x_h^\lambda) - v(x_h) | \geq Ch\,.
\end{equation}
Indeed, for fixed $h$, \eqref{eqn: reflection} implies that
\begin{align}\label{a}
v_\lambda(x_h) - v(x_h)& = \int_{G_\lambda} \big|x_h^\lambda -y\big|^{-\alpha} -\big|x_h-y\big|^{-\alpha}\, dy
\end{align}
and the integrand is positive.
In order to establish \eqref{eqn: claim2}, consider the strip $S_\delta = \{ x\,\big\vert\,  |x\cdot e_1|<\delta\}$. Since $G_\lambda$ is an open subset of $H^+_\lambda$, taking $\delta>0$ sufficiently small, we may find some open ball $B$ that is compactly contained in $G_\lambda\setminus S_\delta$. Choosing $h$ sufficiently small, we see that for any $y\in B$, the function $f(t) = |te_1 + \vphi_h e_n -y|^{-\alpha}$ is smooth for $t\in (-h,h)$. 
Thus, we apply the mean value theorem to the function 
$f(t)$ on this interval in order to rewrite the integrand on the right-hand side of\eqref{a} as 
\begin{equation}\label{eqn: mvt}
|x_h^\lambda -y|^{-\alpha} -|x_h-y|^{-\alpha} =2h(  -\alpha|\tilde{x}_h -y|^{-\alpha-2}(\tilde{x}_h -y)\cdot e_1 )
\end{equation}
for some $\tilde{x}_h = te_1 + \vphi_h e_n$ depending on $y$ with $ t\in (-h,h)$. Notice further that $(y-x_h^\lambda)\cdot e_1 \geq \delta/2$ (and hence $(y-\tilde{x}_h)\cdot e_1 \geq \delta/2$) for all $y \in B$, provided $h$ is taken to be small enough. Therefore, for $y \in B$ we have 

\[ 
v_\lambda(x_h) - v(x_h) \geq  -{2h}\alpha\int_{B} |\tilde{x}_h -y|^{-\alpha-2}(\tilde{x}_h -y)\cdot e_1 \, dy \geq C h.
 \]
 This establishes \eqref{eqn: claim2}. Note that when $\alpha \in (0,n-1)$, we can repeat the argument above to obtain that
 \begin{equation}\label{eqn: claim3}
|v(\bar x + he_1) - v(\bar x - he_1) | \geq Ch\,
\end{equation}
 provided $G_\lambda$ is nonempty.
 
 Now, letting $h$ tend to zero,  we see that \eqref{eqn: claim0} (resp. \eqref{eqn: claim1}) and \eqref{eqn: claim3} (resp. \eqref{eqn: claim2}) are in opposition, and so we deduce that $G_\lambda$ is empty. 

For both cases 1 and 2, we have deduced that $E$ is symmetric about $H_\lambda$. Since $e_1$ was chosen arbitrarily, we find that $E$ is a ball.
\end{proof}

\medskip

We now show that Theorem~\ref{thm: not wulff}  follows directly from Theorem~\ref{thm: Riesz char}.

\begin{proof}[Proof of Theorem~\ref{thm: not wulff}]
We first prove statement (i). 
Let $K_r$ be the dilation of the Wulff shape $K$ with $|K_r|=m$, that is, take $r = (m/|K|)^{1/n}$. Direct computation shows that $H^f_{K_r}(x)=(n-1)/r$ for all $x \in \pa K_r$.
Hence, recalling \eqref{eqn: EL}, if $K_r$ is a critical point of \eqref{eqn: aniso drop}, then 
\[
v_{K_r}(x) = \mu' \qquad \text{ for } x \in \pa K_r\,
\]
where $\mu'=\mu-(n-1)/r$. Since $K_r$ is smooth, applying Theorem~\ref{thm: Riesz char}  concludes the proof of (i). 

\medskip

The proof of (ii) similarly follows from Theorem~\ref{thm: Riesz char}. 
To this end, take $\bar m_1$ to be equal to the constant $m_4$ from Theorem~\ref{thm: regularity}.
 Then, if $E$ is a minimizer of \eqref{eqn: aniso drop} for mass $m \leq \bar m_1$,
  then $E$ is of class $C^{2,\beta}$ for all $\beta <n-\alpha$ and $E$ satisfies the Euler-Lagrange equation \eqref{eqn: EL} with $\mu = \mu_E$. 
Then for any $r>0$, setting $F=rE$, we have
	\[
	H_{rE}^f(y) + v_{rE}(y) = r^{-1}H_E^f(y/r) + r^{n-\alpha} v_E(y/r) \qquad \forall y \in \pa F\,.
	\] 
	
	Setting $x=y/r \in \pa E,$ this implies
	\[
	r^{-1}H_E^f(x) + r^{n-\alpha} v_E(x) = r^{-1}\mu_E + (r^{n-\alpha}-r^{-1}) v_E(x) \qquad \forall x \in \pa E\, .
	\]
	So, if $F$ is a critical point of \eqref{eqn: aniso drop} for $m= |F|$, then the left-hand side is equal to a constant $\mu_F$. Rearranging, this yields 
	\[
	v_E(x) = \frac{\mu_F-r^{-1}\mu_E}{r^{n-\alpha}-r^{-1}} = \text{constant}.
	\]
%\replaceR{Since $\alpha < n-(\sqrt{2}-1)$, for any $\beta<n-\alpha$ we have that $2+\beta > 1/\beta$. Hence, the regularity hypothesis of Theorem~\ref{thm: Riesz char} is satisfied, and we conclude that $E$ has to be a ball. }
For $\alpha\in (0, n-1/2)$, we may readily apply Theorem~\ref{thm: Riesz char} to conclude that $E$ is a ball. For $\alpha\geq n-1/2$, in order to apply Theorem~\ref{thm: Riesz char} to $E$, we verify that we may find $\beta$ satisfying $2+\beta > 1/(n-\alpha)$  and $\beta<n-\alpha$. Our assumption that $\alpha<n-(\sqrt{2}-1)$ ensures that this is possible.  

Finally arguing as above, we conclude that $H_E^f(x)$ is constant on the boundary of this ball $E$. However, the Alexandrov-type theorem proven in \cite{AnisoAlex} asserts that the only smooth set with constant $H_E^f$ is the Wulff shape. We conclude that the Wulff shape of $f$ is a ball, and thus $f$ is a multiple of the Euclidean norm.
\end{proof}

\section{Crystalline surface tensions: the Proof of Theorem \ref{thm: crystals}}\label{sec:cryst_aniso}
%%%%%%%%%%%%%%%%%%%%%%%%%%%%%%%%%%%%%%%%%%%%%%%%%%%%%%%%%%%%%%%%%
%%%%%%%%%%%%%%%%%%%%%%%%%%%%%%%%%%%%%%%%%%%%%%%%%%%%%%%%%%%%%%%%%
%
%We have seen that, while the problem \eqref{eqn: aniso drop} admits a minimizer for small mass, the minimizer cannot be the Wulff shape if $f$ is a smooth elliptic surface tension. This result is in sharp contrast with the isotropic case. Interestingly, Theorem~\ref{thm: not wulff} may fail if the ellipticity and smoothness assumptions are relaxed.

In this section, we prove Theorem \ref{thm: crystals},  providing  an example of a surface tension $f$ for which the Wulff shape \emph{is} the minimizer of \eqref{eqn: aniso drop} for sufficiently small mass. 
%More specifically, we set $n=2$ and consider the surface tension given by 
%\begin{equation}
%	\label{eqn: f}
%	f(\nu) =2(|\nu\cdot e_1|+|\nu \cdot e_2|),
%\end{equation}
% whose corresponding Wulff shape $K$ is the square $[-1/2,1/2]\times[-1/2,1/2]$ of volume one.
When $f$ fails to be $C^1$ or elliptic, one can no longer compute the first variation of $\F$ with respect to arbitrary smooth compactly supported variations. A key point becomes understanding among what variations one can compute the Euler-Lagrange equation. As pointed out in the introduction, this is a subtle point and the focus of considerable research in crystalline mean curvature flow. Here we rely on the following 2D structure result from \cite[Theorem 7]{FigalliMaggiARMA}:
%This is not a new problem and in fact is a rather large area of study as related to crystalline mean curvature flow; see, among others, \cite{bell04,BeNoPa2001_1,BeNoPa2001_2,bellettininovagariey,ChaMorPon15,ChMoNoPo17,taylor78,taylor_1993} and references therein. 
%\footnote{ Instead maybe something like: The question then becomes what ``criticality'' means in such a setting, and whether one can derive meaningful information from computing first and second variations among a restricted class of variations. This question is a relevant one in the setting of crystalline mean curvature flow; see, among others, \cite{bell04,BeNoPa2001_1,BeNoPa2001_2,bellettininovagariey,ChaMorPon15,ChMoNoPo17,taylor78,taylor_1993} and references therein.
%}
%
%For crystalline surface tensions in two dimensions, the following structure result from \cite[Theorem 7]{FigalliMaggiARMA}
%allows us to restrict the class of variations we consider due to a result:
\begin{theorem}[Figalli-Maggi]\label{thm:cryst_min}
	Let $n=2$ and let $f$ be a crystalline surface tension, so that the Wulff shape is a convex polygon with outer unit normals $\{ \nu_i\}_{i=1}^N.$ Then there exists a positive constant $q_0$ such that if $E$ is a $q$-volume-constrained quasi-minimizer with $q <q_0$, then $E$ is a convex polygon with $\nu_E \in \{ \nu_i\}_{i=1}^N$ for $\Hi^{1}$-a.e. $x \in \pa E$.   
\end{theorem}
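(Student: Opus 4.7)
Write $f(\nu)=\max_i x_i\cdot\nu$, so that the $x_i$ are the vertices of the Wulff polygon $K$ and the $\nu_i$ are the outer normals to its edges; on the closed arc of $\S^1$ between two consecutive normals $\nu_{i-1}$ and $\nu_i$, $f$ agrees with the single linear function $\nu\mapsto x_i\cdot\nu$. Since $\ell_f\le f\le L_f$, the crystalline surface energy $\F$ is comparable to Euclidean perimeter, so the $q$-volume-constrained quasi-minimality of $E$ gives all the standard consequences: the density estimates \eqref{eqn: density ests} up to some scale, $\Hi^1$-rectifiability of $\pa^*E$, $\pa E=\pa^*E$ up to $\Hi^1$-null sets after passing to the open representative, and (by indecomposability as in Section~\ref{subsec: q min}) boundedness of $E$. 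In 2D this lets me regard $\pa E$ as a finite disjoint union of rectifiable Jordan curves.

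The heart of the proof is a calibration identity special to the crystalline setting: each constant vector field $X_i:=x_i$ satisfies $X_i\cdot\nu\le f(\nu)$ on $\S^1$, with equality exactly on the closed arc between $\nu_{i-1}$ and $\nu_i$. Consequently, for any partition $\pa^*E=\bigsqcup_i A_i$ measurable with $\nu_E\in[\nu_{i-1},\nu_i]$ on $A_i$, the divergence theorem yields the representation $\F(E)=\sum_i\int_{A_i}x_i\cdot\nu_E\,d\Hi^1$. The plan is to turn this into a 2D crystalline rearrangement that forces $\nu_E\in\{\nu_i\}$: for any convex polygonal competitor with edges $e_j$ of normals $\mu_j$ and lengths $\ell_j$, decompose $\mu_j=a_j\nu_{i(j)}+b_j\nu_{i(j)+1}$ (with $a_j,b_j\ge 0$) and redistribute the edge weights onto the canonical tangent directions $\tau_i=\nu_i^\perp$, producing a new convex polygon of the same area whose edges lie only in the $\{\tau_i\}$ directions. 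The piecewise-linearity of $f$ forces this rearrangement to strictly decrease $\F$, with the gain controlled below by a constant times the $\Hi^1$-measure of $\{x\in\pa^*E:\nu_E(x)\notin\{\nu_i\}\}$ times the angular distance from $\nu_E$ to $\{\nu_i\}$. Pitted against $q|E\triangle F|$, this first-order gain yields a contradiction once $q$ is taken smaller than some absolute $q_0(f)$.

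To upgrade to convexity I argue by contradiction: a polygon with edge normals in $\{\nu_i\}$ that is not convex must possess a reflex vertex $v$ with incident edges $[p,v]\cup[v,q]$ whose chord $[p,q]$ lies outside $E$. Let $F$ be obtained by replacing the two edges with the chord. The chord's outer normal lies in some arc $[\nu_{j-1},\nu_j]$, and a direct piecewise-linear computation using the calibration identity gives $\F(E)-\F(F)\ge c(f)\min(|p-v|,|v-q|)$, while $|E\triangle F|$ is of order the square of the notch diameter. An isotropic rescaling of $F$ by $(|E|/|F|)^{1/2}$ restores the volume constraint at $\F$-cost of order the volume defect, which is absorbed by the linear-in-notch-length main gain; hence quasi-minimality is violated once the notch is small enough. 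Therefore $E$ is a convex polygon with edge normals in $\{\nu_i\}_{i=1}^N$.

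The main obstacle I anticipate is step 2: because the calibration $X_i\cdot\nu\le f(\nu)$ has equality on entire open arcs of $\S^1$, a purely local replacement of a single sub-arc of $\pa^*E$ whose normal stays inside $(\nu_{i-1},\nu_i)$ yields only a second-order improvement in $\F$, and the usual "shrink the arc and contradict $q$-quasi-minimality" trick fails. The remedy, which is the technical core of Figalli--Maggi, is to run the argument globally via the crystalline edge redistribution described above rather than by a single local comparison: the total excess of $\F(E)$ over its rearranged value is bounded below by a linear functional of the $\Hi^1$-measure of the bad-normal set, so the accumulated defect produces the required linear-order violation of quasi-minimality. A secondary subtlety is the uniform handling of the volume constraint, which I plan to address by a single isotropic rescaling whose $\F$-cost is controlled by the volume defect and absorbed into the main gain.
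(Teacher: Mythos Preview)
The paper does not prove Theorem~\ref{thm:cryst_min}; it is quoted from \cite[Theorem~7]{FigalliMaggiARMA} and used as a black box. So there is no ``paper's own proof'' to compare against, and your proposal must be assessed on its own merits.

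Your argument has a genuine gap at its core. You assert that ``the piecewise-linearity of $f$ forces this rearrangement to strictly decrease $\F$'', but the opposite is true: because $f$ is \emph{linear} (not strictly convex) on each arc between consecutive $\nu_{i-1},\nu_i$, decomposing an edge with normal $\mu=a\nu_{i-1}+b\nu_i$ into edges in directions $\tau_{i-1},\tau_i$ preserves $\F$ exactly. You recognize this yourself in your ``main obstacle'' paragraph, but your proposed remedy---running the comparison globally---does not help: a global sum of zero gains is still zero, so no linear-order defect in the $\Hi^1$-measure of the bad-normal set can appear this way. A variant that \emph{can} work is: add the canonical-direction cap on the outside, which preserves $\F$ and strictly increases area, then rescale to restore volume; the rescaling produces an $\F$-drop of order $\F(E)\cdot(\text{cap area})/|E|$, comparable to $|E\triangle F|$, giving a contradiction for $q$ below an absolute constant. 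But that is not the mechanism you wrote.

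There is also a structural circularity. Your step~2 (forcing $\nu_E\in\{\nu_i\}$) is argued ``for any convex polygonal competitor'', effectively assuming $E$ is already a convex polygon; your step~3 (convexity) then assumes the normals are already canonical. Neither step addresses the genuinely hard part: passing from a general set of finite perimeter (with density estimates and rectifiable Jordan boundary) to a polygon in the first place. In Figalli--Maggi this is obtained by first proving convexity of the quasi-minimizer via a quantitative Wulff-type argument, and only then analyzing the normal directions; your sketch reverses this order without supplying the missing reduction.
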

This result is in some sense related to that of \cite{taylor_1993} where Taylor considers motion of polygonal curves by crystalline curvature and proves that line segments flow in their normal directions, keeping their same normals. Therefore the segments expand and contract to maintain their directions and adjacencies as they flow towards a steady state.

Now we can prove Theorem \ref{thm: crystals}.

\medskip

%\begin{proposition}
%	Let $n=2$, $\alpha=1$, and $f$ given as above. Then there exists $m_{3}$ such that if $m<m_3$, the Wulff shape $K=[-1/2,1/2]^2$ is a minimizer of \eqref{eqn: aniso drop}.
%\end{proposition}

\begin{proof}[Proof of Theorem \ref{thm: crystals}]
Let $E$ be a minimizer of the rescaled problem \eqref{eqn: aniso drop 2} with the surface tension $f$ as in \eqref{eqn: f}. It is equivalent to show that there exists $\e'$ such that the Wulff shape is the global minimizer of \eqref{eqn: aniso drop 2} for $\e \leq \e'$.
Recall from Lemma \ref{lem:qmin} that a minimizer $E$ of \eqref{eqn: aniso drop 2} is a $q$-volume-constrained quasi-minimizer of $\F$ with $q=\e c_{n,\alpha}$, so 
%we may apply Theorem~\ref{thm: crystals} 
the hypotheses of the Theorem~\ref{thm:cryst_min} hold for $\e'$ sufficiently small. Furthermore, since $E$ satisfies the density estimates \eqref{eqn: density ests}, a classical argument shows that $d_H(\pa E, \pa K)\to 0$ as $\e \to 0$, where $d_H$ is the Hausdorff distance. Hence, we deduce further that, up to decreasing $\e'$, we have $\{\nu_E\} = \{ \pm e_1, \pm e_2\}$ for $\Hi^{1}$-a.e. $x \in \pa E$. That is, up to a translation, $E$ is a rectangle of the form
\[
R_a = [-a/2, a/2] \times [-1/2a, 1/2a]
\]
for some $a \in (1-\om(\e),1+\om(\e))$, where $\om(\e)$ is a (nonexplicit) modulus of continuity with $\om(\e)\to 0$ as $\e \to 0$. In this way, we need only to consider the one dimensional family of variations (with respect to $a$) :  $E$ is a minimizer of \eqref{eqn: aniso drop 2} if and only if it is a minimizer of the one-dimensional variational problem
\[
\inf\big\{ \E_{\e,f} (R_a) : R_a = [-a/2, a/2] \times [-1/2a, 1/2a] \big\}\,.
\]
Note that $f(\pm e_1) = f(\pm e_2) = 1/2$. Hence, the energy of $R_a$ is 
\beqns
	\begin{aligned}
\varE_{\e,f}(R_a) &= \F(R_a) + \e \V(R_a) \\
						 &=  \left( a + \frac{1}{a}\right) +\e \, \int_{-a/2}^{a/2} \! \int_{-1/(2a)}^{1/(2a)} \! \int_{-a/2}^{a/2}\! \int_{-1/(2a)}^{1/(2a)} \frac{dx_1 dy_1 dx_2 dy_2}{\big((x_1 - x_2)^2 + (y_1 -y_2)^2\big)^{\alpha/2}}.
	\end{aligned}
\eeqns
Note that
\begin{equation}
	\label{eqn: F}
	\frac{d}{da} \F(R_a) = 1 -\frac{1}{a^2}, \qquad \frac{d^2}{da^2} \F(R_a) = \frac{2}{a^3}.
\end{equation}
Using the change of variables $\tld{x}_i = x_i/a$ and $\tld{y}_i = ay_i$ for $i=1,\,2$, we rewrite the nonlocal term as
	\beqns
		\V(R_a) =  \iiiint_{[-1/2,1/2]^4} \big(a^2(\tld{x}_1-\tld{x}_2)^2+a^{-2}(\tld{y}_1 - \tld{y}_2)^2\big)^{-\alpha/2} \,d\tld{x}_1 d\tld{y}_1 d\tld{x}_2 d\tld{y}_2\,.
	\eeqns
Differentiating with respect to $a$ we see that
	\begin{multline}
		\frac{d}{da}\V(R_a) = \iiiint_{[-1/2,1/2]^4} \Bigg[ \frac{-\alpha}{2}\big(a^2(\tld{x}_1-\tld{x}_2)^2+a^{-2}(\tld{y}_1 - \tld{y}_2)^2\big)^{-\alpha/2-1} \\ \left(2a(\tld{x}_1-\tld{x}_2)^2-\frac{2}{a^3}(\tld{y}_1 - \tld{y}_2)^2\right) \Bigg]\,d\tld{x}_1 d\tld{y}_1 d\tld{x}_2 d\tld{y}_2\, ;\nonumber
	\end{multline}
hence, $(d/da)\Big|_{a=1}\V(R_a)=0$. Pairing this with \eqref{eqn: F} we see that $K$ is a critical point of $\E_{\e,f}(R_a)$. Furthermore, direct computation shows that
	\[
		\frac{d^2}{d a^2}\Big|_{a=1} \E_{\e,f}(R_a)=2-C(\alpha) \e  \geq \frac{1}{2}
	\]
for sufficiently small $\e>0$; hence, $K$ is a stable (in the sense of second variations) critical point of the energy functional $\E_{\e,f}$ for sufficiently small $\e>0$. Now using this stability property we will proceed as in \cite[Theorem 5.1]{AcFuMo13} and \cite[Theorem 2.10]{BoCr14} to show that, up to translations, the Wulff shape $K$ is the unique global minimizer.

Suppose there exists a sequence $\{E_k\}$ of global minimizers of \eqref{eqn: aniso drop 2} with $\e_k \to 0$. Note that, up to translations, $E_k = R_{a_k}$ with $a_k\in (1-\om(\e_k),1+\om(\e_k))$. In particular, $a_k\to 1$ and $E_k \to K$ in $L^1$. Now, since $K$ is a strictly stable critical point of $\E_{\e_k,f}$, for $k>k_0$,
\[
\E_{\e_k,f}(R_{a_k}) \geq \E_{\e_k,f}(K) + \frac{1}{4}(a_k-1)^2 + o\big((a_k-1)^2\big) > \E_{\e_k,f}(K) + \frac{1}{8}(a_k-1)^2\,.
\]
We conclude that $a_k=1$ for $k$ sufficiently large, concluding the proof.
% by adapting the arguments in \cite[Theorem 1.1]{AcFuMo13} and \cite[Theorem 2.8]{BoCr14} to the case of anisoperimetric perimeter, we get that there exists $\delta>0$ such that
%	\[
%		\E_{\e_{k_0},f}(F) > \E_{\e_{k_0},f}(K)
%	\]
%for all measurable sets $F\subset\R^n$ with $|F|=|K|$ and $|F \triangle K|<\delta$. We will next show that this estimate holds for all $k>k_0$. Suppose this is not true. Then, $\E_{\e_k,f}(F) \leq \E_{\e_k,f}(K)$ for all $k>k_0$. This, combined with $\F(F)>\F(K)$, implies that $\V(K)-\V(F)>0$. Hence,
%	\[
%			\F(F) - \F(K) \leq \e_k \Big( \V(K) - \V(F) \Big) < \e_{k_0} \Big( \V(K) - \V(F) \Big),
%	\]
%that is,
%	\[
%		\E_{\e_{k_0},f}(F) < \E_{\e_{k_0},f}(K).
%	\]
%This gives the required contradiction. Thus, $\E_{\e_k,f}(F) > \E_{\e_k,f}(K)$ for all $k>k_0$. Using the $L^1$-convergence of $E_k$ to $K$, for $k$ large enough we have that $\E_{\e_k,f}(E_k) > \E_{\e_k,f}(K)$ and this contradicts the minimality of $E_k$. Therefore $K$ is the global minimizer of $\E_{\e,f}$ for sufficiently small $\e>0$.
\end{proof}	
%%Let us make a couple of comments about higher dimensions. Can we deduce that $q$-minimizers (or at least OUR minimizers) are admissible sets in the Bellettini-Novaga sense? If so, we could perhaps use their way of computing the first variation to extend our example to higher dimensions.

\medskip

\begin{remark}[Higher dimensions] \label{rem:higher D}
In their recent work Figalli and Zhang extend the rigidity result of Theorem \ref{thm:cryst_min} to higher dimensions (see \cite[Theorem 1.1]{FiZh2019}). Using this as our starting point, we can argue as in the proof above, and reduce the admissible class of variations to a family defined via a finite number of parameter. To be precise, given
	\[
		f(\nu) = \frac{1}{2}\|\nu\|_{\ell^1(\Rn)} = \frac{1}{2}\left( \sum_{i=1}^n |\nu \cdot e_i| \right)
	\]
$E$ is a minimizer of \eqref{eqn: aniso drop 2} if and only if it is a minimizer of the $(n-1)$-dimensional variational problem
\begin{multline}\nonumber
\inf\Bigg\{ \E_{\e,f} (R_{a_1,a_2,\ldots,a_{n-1}}) : \\
	 R_{a_1,a_2,\ldots,a_{n-1}} = \left[-\frac{a_1}{2}, \frac{a_1}{2}\right] \times \left[ -\frac{a_2}{2}, \frac{a_2}{2}\right] \times \cdots \times \left[-\frac{1}{2a_{1}\cdots a_{n-1}}, \frac{1}{2a_{1}\cdots a_{n-1}}\right] \Bigg\}.
\end{multline}
A calculation of the first and second derivatives with respect to the parameters $a_1,\ldots,a_{n-1}$ shows that the Wulff shape $R_{1,\ldots,1}$ is a stable critical point of the energy $\E_{\e,f}$ for $\e$ sufficiently small, and we can conclude its minimality as above.
\end{remark}

%%%%%%%%%%%%%%%%%%%%%%%%%%%%%%%%%%%%%%%%%%%%%%%%%%%%%%%%%%%%%%%%%
%%%%%%%%%%%%%%%%%%%%%%%%%%%%%%%%%%%%%%%%%%%%%%%%%%%%%%%%%%%%%%%%%
\section{Fully anisotropic models: the Proof of Theorem \ref{thm: wulff min}}\label{sec:doubly_aniso}
%%%%%%%%%%%%%%%%%%%%%%%%%%%%%%%%%%%%%%%%%%%%%%%%%%%%%%%%%%%%%%%%%
%%%%%%%%%%%%%%%%%%%%%%%%%%%%%%%%%%%%%%%%%%%%%%%%%%%%%%%%%%%%%%%%%

In this section we prove Theorem \ref{thm: wulff min}. Before we proceed, let us remark that the problems \eqref{eqn: general} and \eqref{eqn: general confined} admit minimizers when $m$ is sufficiently small. Indeed, existence for \eqref{eqn: general confined} follows from the direct method in the calculus of variations, while existence for \eqref{eqn: general} follows by arguing as in Lemmas \ref{lem: nonopt}, \ref{lem: non opt satisfied}, and Theorem \ref{thm:exist_nonexist}(i), thanks to the subcritical scaling of $\mathcal{U}_i$ with respect to $\F$.

 The interesting feature of these three classes of problems is that each $\mathcal{U}_i(E)$ defined in 
\eqref{eqn: functionals} is maximized by the Wulff shape among sets of a fixed volume, so the variational problems \eqref{eqn: general} and \eqref{eqn: general confined} exhibit the same type of competition between terms as in the classical liquid drop model \eqref{eqn: iso drop}. Let us see that $ \mathcal{U}_1(E)$ is maximized by $K$ among sets $E$ with $|E|=|K|$. Indeed, note that the supremum in $ \mathcal{U}_1(K)$ is attained at $y=0$; up to a translation we may assume the same for $E$. Then
\beqn \label{eqn:Wulff_max}
 \mathcal{U}_1(K) -  \mathcal{U}_1(E) = \int_{K\setminus E} f_*(x)^{-\alpha}\, dx -\int_{E\setminus K} f_*(x)^{-\alpha}\,dx >|K\setminus E| - |E\setminus K| =0
\eeqn
since $f_*(x) >1$ in $E\setminus K$ and $f_*(x)<1$ in $K\setminus E$. The computation is analogous for $ \mathcal{U}_2$ and $ \mathcal{U}_3$.

A key tool in the proof of Theorem~\ref{thm: wulff min} is the following strong form of the quantitative anisotropic isoperimetric inequality which was shown in  \cite[Proposition~1.4]{Neum16}:
	\begin{proposition} \label{Fug-type}
Let $f$ be a  smooth elliptic surface tension with corresponding surface energy $\F$ and Wulff shape $K$. 
Let $E$ be a set such that $|E| = |K|$ and $\bary E = \bary K$, where $\bary E  =|E|^{-1} \int_E x\, dx$ denotes the barycenter of $E$. Suppose 
$$\partial E = \{ x + u(x) \nu_K(x)  \, \big\vert\,  x \in \partial K\}$$
where $u: \partial K \to \mathbb{R}$ is in $ C^1(\partial K)$. There exist $C$ and $\mu$ depending on $n$ and $f$  such that if $\| u \|_{C^1(\partial K)}\leq \mu$, then
 \begin{equation}\label{fugnew}
 C\|u\|_{H^1(\partial K)}^2 \leq  \F(E) - \F(K).
\end{equation}
\end{proposition}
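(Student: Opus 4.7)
The strategy I would follow is a second-order Taylor expansion of $\F$ around the Wulff shape $K$, parametrized by the normal graph function $u$, combined with coercivity of the second variation on the subspace cut out by the volume and barycenter constraints. I would pull back $\F(E)=\int_{\pa E} f(\nu_E)\,d\Hi^{n-1}$ to $\pa K$ via the diffeomorphism $\Phi(x)=x+u(x)\nu_K(x)$. Because $f$ is smooth and elliptic, $K$ is smooth and uniformly convex, so both the tangential Jacobian of $\Phi$ and the pullback of the Euclidean outer normal $\nu_E$ admit explicit expansions in $u$ and $\nabla^{\pa K}u$; the hypothesis $\|u\|_{C^1(\pa K)}\leq \mu$ also allows Taylor expansion of $f$ itself around $\nu_K$. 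Collecting terms by order yields
\begin{equation*}
\F(E)-\F(K) = L(u) + \tfrac12 Q(u) + R(u),
\end{equation*}
with $L$ linear, $Q$ quadratic in $(u,\nabla^{\pa K}u)$, and remainder $|R(u)|\leq C\|u\|_{C^1(\pa K)}\|u\|_{H^1(\pa K)}^2$.

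The linear term is easy to dispose of: since $K$ is the anisotropic isoperimetric minimizer, the anisotropic mean curvature $H_K^f$ is a positive constant $c_f$ and $L(u)=c_f\int_{\pa K}u\,d\Hi^{n-1}$; Taylor-expanding $|E|=|K|$ yields $\int_{\pa K}u\,d\Hi^{n-1}=O(\|u\|_{L^2(\pa K)}^2)$, so $|L(u)|\leq C\|u\|_{L^2(\pa K)}^2$. A direct computation produces the divergence-form expression
\begin{equation*}
Q(u) = \int_{\pa K}\bigl[\,B_f(x)(\nabla^{\pa K}u,\nabla^{\pa K}u) - c_f^*(x)\,u^2\,\bigr]\,d\Hi^{n-1},
\end{equation*}
where $B_f$ is a uniformly elliptic tangential form built from $\nabla^2 f(\nu_K)$ restricted to the tangent space of $\pa K$, and $c_f^*$ is a bounded function built from $H_K^f$ and the anisotropic second fundamental form of $\pa K$.

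The heart of the proof, and the main obstacle, is to show that $Q$ is coercive in $H^1(\pa K)$ on the subspace singled out by the two constraints. The self-adjoint elliptic operator associated with $Q$ has a finite-dimensional kernel consisting of the infinitesimal symmetries of the anisotropic isoperimetric problem at $K$: the constants (dilations) and an $n$-dimensional ``first harmonic'' subspace corresponding to the translation invariance of $\F$. The volume constraint eliminates the constant mode, while expansion of $\bary E=\bary K$ produces orthogonality against each translation mode, modulo an error of order $\|u\|_{L^2(\pa K)}^2$. In the classical isotropic case this positivity is established via spherical-harmonic decomposition of the round sphere; for the anisotropic Wulff shape I would instead either appeal to spectral analysis of the Jacobi-type operator from the known second-variation stability theory for Wulff shapes, or proceed by compactness/contradiction: normalize a hypothetical sequence $u_k$ with $Q(u_k)\leq k^{-1}\|u_k\|_{H^1(\pa K)}^2$ and the orthogonality conditions, extract a weak $H^1$ limit $u_\infty$, and use ellipticity of $B_f$ together with compact embedding into $L^2$ to force $u_\infty$ to be a nonzero element of the kernel orthogonal to the kernel, a contradiction. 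Assembling everything gives
\begin{equation*}
\F(E)-\F(K)\;\geq\; \tfrac12 Q(u)-C\|u\|_{L^2(\pa K)}^2 - C\mu\|u\|_{H^1(\pa K)}^2 \;\geq\; (\gamma - C\mu)\|u\|_{H^1(\pa K)}^2
\end{equation*}
for some $\gamma>0$, and taking $\mu$ small enough that $C\mu\leq \gamma/2$ yields \eqref{fugnew}.
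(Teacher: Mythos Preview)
The paper does not prove this proposition; it is quoted from \cite[Proposition~1.4]{Neum16} and used as a black-box tool in Section~\ref{sec:doubly_aniso}. Your outline is the standard Fuglede-type strategy and matches, in architecture, what that reference does.

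The one place your sketch glosses over the real difficulty is the coercivity step. Your compactness/contradiction alternative produces a nonzero weak limit $u_\infty$ with $Q(u_\infty)\leq 0$ satisfying the (limiting) orthogonality conditions; to reach a contradiction you still need both that $Q\geq 0$ on the volume-preserving subspace (stability of $K$, which does follow from its strict minimality) \emph{and} the rigidity statement that $Q(u_\infty)=0$ forces $u_\infty$ into the span of constants and the translation modes $x\mapsto v\cdot\nu_K(x)$. In the isotropic case this rigidity is read off from the spherical-harmonic decomposition on the round sphere; for a general smooth elliptic $f$ there is no explicit diagonalization of the Jacobi-type operator on $\pa K$, and this equality case is the substantive analytic input that your sketch assumes rather than supplies. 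The remaining ingredients---the expansion of $L(u)$ via constancy of $H_K^f$, the remainder bound $|R(u)|\leq C\mu\|u\|_{H^1(\pa K)}^2$, and the handling of approximate orthogonality coming from $|E|=|K|$ and $\bary E=\bary K$---are standard and correctly stated.
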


\medskip

The idea of the proof of Theorem~\ref{thm: wulff min} the following: Using the Wulff shape as a competitor in \eqref{eqn: general} and applying Proposition~\ref{Fug-type}, we find that  $\mathcal{U}_i(K)-\mathcal{U}_i(E)$ is bounded below by a constant (increasing with $m^{-1}$) multiple of the squared distance between $E$ and $K$. On the other hand, the criticality and $C^2$ bound of $\mathcal{U}_i(K)$ ensure that this gap is at most a fixed constant multiple of the squared distance.
%\footredR{I still find the wording of this paragraph confusing. Tell me if you guys have ideas for how to phrase it more clearly}
 These ideas are in the spirit of those used in \cite{FFMMM,KM14} in the setting of \eqref{eqn: iso drop}.

\medskip

\begin{proof}[Proof of Theorem~\ref{thm: wulff min}]
Let us start by observing the scaling of the energies:
\[
\begin{array}{lll}
	\mathcal{U}_1(rE)& = r^{\g_1} \mathcal{U}_1(E) & \qquad \g_1 = n-\alpha\, , \\
	\mathcal{U}_2(rE) &= r^{\g_2} \mathcal{U}_2(E) & \qquad \g_2 = n+\beta\, , \\
	\mathcal{U}_3(rE)& = r^{\g_3} \mathcal{U}_3(E) +r^n\log(r)|E| & \qquad \g_3 = n\, .
\end{array}
\]
Thus in place of \eqref{eqn: general} and \eqref{eqn: general confined} respectively, we set $\e_i = (m|K|^{-1})^{(\g_i - n+1)/n}$ and study the equivalent variational problems
\begin{equation}\label{eqn: rescaled}
\inf \Big\{ \F(E) + \e_1 \, \mathcal{U}_1(E)  \, \big\vert\,  |E| =|K|\Big\}
\end{equation}
and 
\begin{equation}\label{eqn: rescaled 2}
\inf\Big\{ \F(E) + \e_i \, \mathcal{U}_i(E)  \, \big\vert\,  |E| =|K|, \ E\subset B_{c_{n,f}} \Big\}\, \qquad i=2,3\,.
\end{equation}
To prove Theorem~\ref{thm: wulff min}, it is equivalent to show that for $i=1,2,3$, there exists $\e'_i=\e'_i(n, f, \mathcal{U}_i)$ such that for $\e\leq \e'_i$, the only  minimizers of \eqref{eqn: rescaled} and \eqref{eqn: rescaled 2} are Wulff shapes.

\bigskip

\noindent {\it Claim:} For $i=1,2,3$, suppose $E_i$ is a global minimizer of \eqref{eqn: rescaled} or \eqref{eqn: rescaled 2}. Then, up to a translation and provided $\e_i$ is sufficiently small, $\pa E_i$  is  a small $C^{1}$ graph over $\pa K$: 
\[
 \pa E_i = \{ x + u_i(x) \nu_K(x)  \, \big\vert\,  x \in \pa K \},
 \]
where $u_i: \pa K \to \mathbb{R}$ has $\|u_i\|_{{C^1}(\pa K)}\leq \mu/3$ with $\mu$ as in Proposition~\ref{Fug-type}. 

\bigskip

Let us assume the claim for now, since the proof differs between \eqref{eqn: rescaled} and \eqref{eqn: rescaled 2}. Up to replacing $\mu/3$ by $\mu$, we may further assume that $\bary E_i = \bary K.$  We can therefore apply Proposition~\ref{Fug-type} to find that
\begin{equation}\label{eqn: quantitative}
C\| u_i\|_{H^1(\pa K)}^2 \leq \F(E_i) - \F(K)
\end{equation}
with the constant $C>0$ given in \eqref{fugnew}.
Now, taking the Wulff shape as a competitor in \eqref{eqn: rescaled} and \eqref{eqn: rescaled 2} and rearranging the energy, we find that
\[
\F(E_i) -\F(K) \leq \e_i \big( \mathcal{U}_i(K) - \mathcal{U}_i(E_i)\big).
\]
Letting $X_i = u_i \nu_K$, a Taylor expansion of $\mathcal{U}_i(E_i)$ yields 
\begin{align}\label{eqn: exp}
\mathcal{U}_i(E_i) &= \mathcal{U}_i(K) +\delta\, \mathcal{U}_i(K)[X] +\frac{1}{2} \delta^2\,\mathcal{U}_i(K)[X, X]+\mu\, O(\|u\|_{H^1(\pa K)}^2)\,.
\end{align}
Here $\delta \, \mathcal{U}_i(K)$ and $\delta^2\, \mathcal{U}_i(K)$ denote the first and second variations respectively.
Direct computation (recalling that $f_*(x) = 1$ for $x \in \pa K$) shows that the first variations are given by
\begin{equation} \label{eqn: first var}
\delta\, \mathcal{U}_1(K)[X_1] = \int_{\pa K} u_1 \, d\Hi^{n-1} \,, \quad
\delta\, \mathcal{U}_2(K)[X_2] = \int_{\pa K} u_2 \, d \Hi^{n-1}\,,\quad
\delta\, \mathcal{U}_3(K)[X_3] = 0\,.
\end{equation}
%\begin{equation}\label{eqn: first var}\begin{split}
%\delta\, \mathcal{U}_1(K)[X_1] &= \int_{\pa K} u_1 \, d\Hi^{n-1} \,, \\ 
%\delta\, \mathcal{U}_2(K)[X_2] &= \int_{\pa K} u_2 \, d \Hi^{n-1}\,,\\
%\delta\, \mathcal{U}_3(K)[X_3] &= 0\,.
%\end{split}
%\end{equation}
Arguing as in \cite[proof of Lemma 4.1]{Neum16}, the fact that $|E_i| = |K|$ implies that  
\begin{equation}\label{eqn: first var 2}
\int_{\pa K} u_i \, d\Hi^{n-1} = - \frac{1}{2} \int_{\pa K } H_K u_i^2 \, d\Hi^{n-1} + \mu \, O(\|u_i\|_{H^1(\pa K)}^2)
\end{equation}
where $H_K$ is the (isotropic) mean curvature of $K$.
Next, a somewhat lengthy yet standard computation (making use of the identity $\na f_*(x) = \nu_K(x) /f(\nu_K(x))$ for $x \in \pa K$) shows that 
\begin{equation} \begin{aligned} \label{eqn: second var}
	\delta^2\mathcal{U}_1(K)[X_1,X_1] & \geq \int_{\pa K} u_1^2 H_K + u_1 \na u_1 \cdot {\nu_K} - \alpha u_1^2 f(\nu_K)^{-1}\, d\H^{n-1} \\
														& \geq -\kappa_1 \|u_1\|_{H^1(\pa K)}^2\, ,\\
	\delta^2\mathcal{U}_2(K)[X_2,X_2] & \geq \int_{\pa K} u_2^2 H_K + u_2 \na u_2 \cdot {\nu_K} +\beta u_2^2 f(\nu_K)^{-1} \, d\H^{n-1} \\
														&\geq -\kappa_2 \|u_2\|_{H^1(\pa K)}^2\, ,\\
	\delta^2\mathcal{U}_3(K)[X_3,X_3] & \geq \int_{\pa K} u_3^2 f(\nu_K)^{-1}\, d\H^{n-1} \geq -\kappa_3 \|u_3\|_{H^1(\pa K)}^2
\end{aligned}
\end{equation}
for some constants $\kappa_i >0$.

%\[
% \delta^2\mathcal{U}(E)[u, u] \geq \int_{\pa^*E} \DIVV(u \nu_E f_*(x)^{-\alpha})u.
% \]
Together, \eqref{eqn: exp}, \eqref{eqn: first var}, \eqref{eqn: first var 2}, and \eqref{eqn: second var} imply that 
\begin{equation}\label{eqn: upper bound}
\mathcal{U}_i(K) - \mathcal{U}_i(E_i) \leq C_i \| u_i\|_{H^1(\pa K)}^2.
\end{equation}
where $C_i = C(n,f,\mathcal{U}_i)$. Finally, combining \eqref{eqn: quantitative} and \eqref{eqn: upper bound}, we see that taking $\e_i$ sufficiently small forces $u_i= 0$, and hence $E_i$ is the Wulff shape.

\bigskip

Let us now prove the claim, arguing separately for $\mathcal{U}_1(E)$ and for $\mathcal{U}_2(E)$, $\mathcal{U}_3(E)$.
Let $y_E \in \R^n$ be a point attaining the supremum in $\mathcal{U}_1(E)$. Then, using \eqref{eqn:Wulff_max}, for any $E, F$ with $|E| = |F|$ we have
	\[
	\mathcal{U}_1(E) - \mathcal{U}_1(F)\leq \int_{E\triangle F} f_*(x-y_E)^{-\alpha} \,dx\leq \int_{rK} f_*(x)^{-\alpha}\, dx  
	\]
	where $r$ is chosen so that $|rK| = |E\triangle F|$. The term on the right-hand side is equal to $\frac{n}{n-\alpha}|K| r^{n-\alpha}$.
	In this way, arguing as in the proof of Lemma~\ref{lem:qmin}, we find that a minimizer $E_1$ of \eqref{eqn: general} satisfies 
	\[
	\F(E_1) \leq \F(F) + \Lambda\, |E_1\triangle F|^{\g_1/n}\, \qquad \text{ whenever } E_1\triangle F\cc B_r(x)
	\]
	for some $x \in R^n$ and $r \leq \ell_f/L_f$. =
	 Since $\g_1 > n-1$, the results of \cite{Tamanini82} show that $\pa^*E$ is locally $C^{1,\eta}$ for $\eta= \frac{1-\alpha}{2}$. Then, using the first variation of the energy given by
	\[
		H_{E_1}^f(x) + \e_1\,f_*(x)^{-\alpha}=\text{const} \qquad \forall x\in\pt^* E_1,
	\]
	we can repeat Steps 2 and 3 of the proof of Theorem~\ref{thm: regularity} to conclude the claim in this case.

Now, for any $E, F \subset B_{c_{n,f}}$, we have
\[
\mathcal{U}_2(E) - \mathcal{U}_2(F) \leq \int_{E\triangle F} f_*(x-y_E)^\beta \, dx \leq (2c_{n,f}/\ell_f)^\beta  |E\triangle F|.
\]
Similarly,
\[
\mathcal{U}_3(E) -\mathcal{U}_3(F) \leq \int_{E\triangle F} \log (f_*(x-y_F)) \, dx \leq \log(2c_{n,f}/\ell_f) |E\triangle F|.
\]
As such, we may argue as in Lemma~\ref{lem:qmin} to find that if $E_i$ is a minimizer of \eqref{eqn: rescaled 2} for $i=2,3$, then $E_i$ is a quasi-minimizer of the surface energy. Again, using the Euler-Lagrange equations
	\begin{gather*}
		H_{E_2}^f(x) - \e_2\,f_*(x)^{\beta}=\text{const} \qquad \forall x \in \pt^* E_2,\\
		H_{E_3}^f(x) - \e_3\,\log f_*(x)=\text{const} \qquad \forall x \in \pt^* E_3,
	\end{gather*}	
we may repeat Steps 2 and 3 of the proof of Theorem~\ref{thm: regularity} to conclude the claim.
\end{proof}

\section{Open Problems}\label{open} 
We conclude by mentioning and recalling some important open problems. First note that all the open problems for the liquid drop problem 
(\ref{eqn: iso drop}) carry over to (\ref{eqn: aniso drop}). In the case of smooth anisotropies, it is not clear what more one could hope to prove about minimizers of (\ref{eqn: aniso drop}), with a clear characterization unlikely. Here, numerical computations could prove quite insightful in a qualitative assessment of the difference between the shape of minimizers  and the  Wulff shape. For crystalline anisotropies there remains much to be done in determining the minimality of the Wulff shape. The most tractable problem would be to generalize Theorem \ref{thm: crystals} to all regular polygons in 2D. 

We would like to end by highlighting the open problem alluded to in Remark \ref{remark-1}. This problem is, in our opinion, a rather fundamental problem \eqref{anisotropic capacity problem} which pertains to 
 the   ``equilibrium figure" with an anisotropic potential. Unfortunately,  given that Riesz rearrangement techniques fail, it is not clear what techniques one could employ.

\subsection*{Acknowledgements} The authors would like to thank the anonymous reviewers for their careful reading and their insightful comments. RC was supported by an NSERC (Canada) Discovery Grant. RN was supported in part by the National Science Foundation under Grant No. DMS-1502632 ``RTG: Analysis on manifolds'' at Northwestern University and Grant No. DMS-1638352 at the Institute for Advanced Study.

\bigskip

\appendix

%%%%%%%%%%%%%%%%%%%%%%%%%%%%%%%%%%%%%%%%%%%%%%%%%%%%%%%%%%%%%%%%%%
%%%%%%%%%%%%%%%%%%%%%%%%%%%%%%%%%%%%%%%%%%%%%%%%%%%%%%%%%%%%%%%%%%
\section{Proof of Lemma~\ref{lem:qmin}}\label{app: proof of q min}
%%%%%%%%%%%%%%%%%%%%%%%%%%%%%%%%%%%%%%%%%%%%%%%%%%%%%%%%%%%%%%%%%%
%%%%%%%%%%%%%%%%%%%%%%%%%%%%%%%%%%%%%%%%%%%%%%%%%%%%%%%%%%%%%%%%%%

Let us prove Lemma~\ref{lem:qmin}.
\begin{proof}
Taking $F$ to be any competitor with $|E|=|F|=1$, \eqref{eqn: lip} implies that
\[
\F(E) \leq \F(F) + \e \big(\V(E) - \V(F)\big) \leq \F(F) + c_{n,\alpha}\e\, |E\triangle F|.
\]
This proves (i). To show (ii), we employ the typical trick of showing that $E$ is a minimizer of the unconstrained variational problem
\begin{equation}
	\label{eqn: unconstrained problem}
\inf \left\{\E_{\e,f}(F) + Q \big| |F|-1 \big| \, \big\vert\,  F \subset \R^n\right\}.
\end{equation}
for some fixed constant $Q =5n\bar \E_{\e,f},$ where we adopt the shorthand $\bar \E_{\e,f}= \E_{\e,f}(E)$. It suffices to show that if
	\begin{equation}\label{eqn: small2}
		\E_{\e,f}(F) + Q \big| |F|-1 \big| \leq \bar\E_{\e,f},
	\end{equation}
then $|F|=1$. For any $F$ satisfying \eqref{eqn: small2}, let $G=r F$ so that $|G|=1$. We note immediately that $r\in[1,2)$. Indeed, 
\begin{align*}
	\E_{\e,f}(G)<\E_{\e,f}(F)\leq \bar \E_{\e,f} & \qquad \qquad \text{ if }r<1,\\
	 Q/2 \leq Q(1-r^{-n}) =Q ||F|-1| \leq \bar \E_{\e,f}  & \qquad \qquad \text{ if } r\geq 2,
\end{align*}
violating the minimality of $E$ in \eqref{eqn: aniso drop 2} and our choice of $Q$ respectively. Now, since $n-1<2n-\alpha$, we have
\[
\E_{\e,f}(F) \geq r^{\alpha -2n}\E_{\e,f}(G) \geq r^{\alpha -2n}\bar\E_{\e,f}
\]
and so rearranging \eqref{eqn: small2} gives
\[
Q(1-r^{-n}) \leq \bar\E_{\e,f}(1-r^{\alpha-2n})\,.
\]
By concavity, we have the bounds $1-r^{-n} \geq (r-1)/2$ and $1-r^{\alpha -2n} \leq 2n(r-1)$ for $r\in [1,2)$.
Thus,
\[
Q(r-1) \leq 4n\bar\E_{\e,f}(r-1),
\]
forcing $r=1$ by our choice of $Q.$ We conclude that $E$ is a minimizer of \eqref{eqn: unconstrained problem}.

Hence, taking any $F$ with $|E\triangle F|\leq 1$ (and hence $|F|\leq 2$) as a competitor in \eqref{eqn: unconstrained problem} and recalling \eqref{eqn: lip}, we obtain
	\beqns
		\begin{aligned}
			\F(E) &\leq \F(F) + \e\, \big( \V(F) - \V(E) \big) + Q \big| |F|-1 \big| \\
					&\leq \F(F) + (\e\,c_{n,\alpha}+Q) |E \triangle F|
		\end{aligned}
	\eeqns
 Thus, $E$ is a $(\Lambda,1)$-quasi-minimizer with $\Lambda=\e\,c_{n,\alpha}+Q$. If $\e\leq 1$, then $\Lambda$ can be taken to be independent of $\e$. 
\end{proof}

%%%%%%%%%%%%%%%%%%%%%%%%%%%%%%%%%%%%%%%%%%%%%%%%%%%%%%%%%%%%%%%%%%
%%%%%%%%%%%%%%%%%%%%%%%%%%%%%%%%%%%%%%%%%%%%%%%%%%%%%%%%%%%%%%%%%%
\section{Regularity of minimizers}\label{appendix: regularity}
%%%%%%%%%%%%%%%%%%%%%%%%%%%%%%%%%%%%%%%%%%%%%%%%%%%%%%%%%%%%%%%%%%
%%%%%%%%%%%%%%%%%%%%%%%%%%%%%%%%%%%%%%%%%%%%%%%%%%%%%%%%%%%%%%%%%%

Next we outline the proof of the regularity result.

\begin{proof}[Proof of Theorem~\ref{thm: regularity}]
As before, it will be convenient to consider \eqref{eqn: aniso drop 2} in place of the equivalent problem \eqref{eqn: aniso drop}; by rescaling, the same statements will hold for minimizers of \eqref{eqn: aniso drop}. We also assume without loss of generality that we have multiplied $f$ by a constant so that the Wulff shape $K$ has unit mass.

\bigskip

\noindent {\it Step 1: Quasi-minimality and $C^{1,\gamma}$ regularity.}
Let $E$ be a minimizer of \eqref{eqn: aniso drop 2}. Then, by Lemma \ref{lem:qmin}, $E$ is a $q$-volume-constrained quasi-minimizer of $\F$.
%The nonlocal energy $\V(E)$ is Lipschitz with respect to the symmetric difference: if $|E|, |F| \leq 2,$ then $\V(E)-\V(F)\leq c_{n,\alpha}|E\triangle F|$ (see \cite[Prop. 2.1]{BoCr14}).
% \leq 2\int_{E\triangle F} v_E(x) \, dx
% \leq \frac{2n\om_n}{n-\alpha} m^{1-\alpha/n} |E\triangle F|.
%It follows that a minimizer $E$ of \eqref{eqn: aniso drop} is a quasi-minimizer of $\F$, i.e., there exist $\Lambda$ and $r_0$ such that 
%%if $F\triangle E \Subset B_r(x)$ for $r \in (0,r_0)$ and some $x\in \R^n$, 
%\begin{equation}\label{eqn: qmin}
%\F(E) \leq \F(F)+ \Lambda|E\triangle F| \qquad \text{ if }\ \  F\triangle E  \Subset B_r(x), \ \ r \in (0,r_0),\ x\in\R^n.
%\end{equation}
 The epsilon-regularity theory for quasi-minimizers of $\F$,  see \cite{alm66, SSA77, bomb82, DuzaarSteffen02}, ensures that $\pa^*E$ is of class $C^{1,\g}$ for $\g \in (0,1)$. To state this more precisely, let us introduce a bit of notation.
For $x \in \Rn$, $r>0$, and $\nu \in \S^{n-1}$, we define
\begin{align*} 
\mathbf{C}_{\nu}(x,r) &= \{ y\in \mathbb{R}^{n}  \, \big\vert\,  |p_{\nu} (y-x)|<r, \ |q_{\nu}(y-x) < r\},\\
\mathbf{D}_{\nu} (x,r) &= \{ y \in \Rn  \, \big\vert\,  |p_{\nu}(y-x)| <r , \  |q_{\nu}(y-x) | = 0\},
\end{align*}
where $q_{\nu}(y) =  y \cdot \nu$ and $p_{\nu}(y) = y - (y \cdot \nu) y.$
We then define the \textit{cylindrical excess} of $E$ at $x$ in direction $\nu$ at scale $r$ to be
$$\text{exc} (E, x ,r,\nu ) = \frac{1}{r^{n-1}} \int_{\mathbf{C}_{\nu}(x,r) \cap \partial^* E}  \frac{|\nu_E - \nu|^2}{2} \,d\Hn\,.$$
%The following regularity theorem for almost minimizers of an elliptic integrand is the translation in the language of sets of finite perimeter of a classical result in the theory of currents,. For a closer statement to ours, see Lemma 3.1 in \cite{DePMag14}.
 For all $\g\in(0,1)$ there exist constants $C(n,f,\g)$ and $\delta$ depending on $n, f$ and $\g$ such that if a quasi-minimizer $E$ satisfies
\begin{equation}\label{eqn: small excess}
{\rm{{exc}}}(E, x,r,\nu) + q\, r <\delta
\end{equation}
then there exists $u\in C^{1,\g}(\mathbf{D}_{\nu}(x,r))$ with $u(x) =0$ such that
\begin{equation}\label{eqn: graph scale}
\begin{split}
 \mathbf{C}_{\nu}(x,r/2) \cap \partial^*E & = (\id + u \nu)(\mathbf{D}_{\nu}(x,r/2)),\\
  \|u\|_{C^0(\mathbf{D}_\nu(x, r/2))}& < C(n,f,\g)\,r\, {\rm{{exc}}}(E,x,r,\nu)^{1/(2n-2)},\\
  \|\nabla u\|_{C^0(\mathbf{D}_{\nu}(x, r/2))}&< C(n,f,\g)\, {\rm{{exc}}}(E,x,r,\nu)^{1/(2n-2)},\\
  r^{\g} [\nabla u ]_{C^{0,\g}(\mathbf{D}_{\nu}(x,r/2))}& < C(n,f,\g)\, {\rm{{exc}}}(E,x,r,\nu)^{1/2}.
\end{split}
\end{equation} 
For any $x \in \pa^*E$, \eqref{eqn: small excess} will be satisfied at sufficiently small scale $r$, and we conclude that $\pa^*E$ is locally a $C^{1,\g}$ hypersurface.

\bigskip

\noindent {\it Step 2: $C^{2,\beta}$ regularity.}  Furthermore, as we noted in equation \eqref{eqn: EL}, $E$ satisfies the Euler-Lagrange equation
\[
H_E^f(x) + \e\, v_E(x) = \mu\  \qquad \forall x\in \pa^*E
\]
in a distributional sense. Given $x\in \pa^*E$, if we choose $r$ to be suitably small so that \eqref{eqn: small excess} holds, the Euler-Lagrange equation reads
\begin{equation}\label{eqn: EL app}
\DIVV'(\na' f'(\na'u(z)) = \e v_E(z,u(z)) -\mu \qquad \forall z \in \mathbf{D}_{\nu}(x,r/2).
\end{equation}

Here, $f'(z) = f(-z,1)$ and $\DIVV'$ and $\na'$ indicate derivatives with respect to the $z$ variable. Applying Schauder estimates (see \cite[Theorem~6.2]{GT}), 
\begin{equation}\label{eqn: c2beta}
\| u\|_{C^{2,\beta}(\mathbf{D}_\nu(x,r/4))} \leq C(r,f, \| u\|_{C^{1,\g}(\mathbf{D}_\nu(x,r/2)},\beta) \| v_E-\mu\|_{C^{0,\beta}(\mathbf{D}_\nu(x,r/2))}\,.
\end{equation}
%\replaceR{Direct computation shows that $\| v_E\|_{C^{0,\beta}} \leq C(n,\alpha, \beta ,|E|)$  for all $\beta\in(0,  \min\{1, n-\alpha\})$.}{
Finally, \eqref{eqn: Holder} shows that $\| v_E\|_{C^{0,\beta}} \leq C(n,\alpha, \beta ,|E|)$  for all $\beta\in(0,  \min\{1, n-\alpha\})$. 
 This concludes (i).

\bigskip

\noindent {\it Step 3: Improved convergence.} To establish (ii) and (iii), consider a sequence of minimizers $E_\e$ of \eqref{eqn: aniso drop 2} with $\e \to 0$. Provided $\e \leq 1$, $r_0$ and $q$ in \eqref{eqn: qmin} can be taken to be independent of $\e$, and so $E_\e$ satisfy uniform (in $\e$) density estimates. Thanks to the diameter bound \eqref{eqn: diameter bound} and the Wulff inequality \eqref{eqn: wulff inequality}, $E_\e \to K$ in $L^1$, and thanks to the density estimates, $d_H(\pa E_\e , \pa K)\to 0,$ where $d_H$ is the Hausdorff distance. We argue as in \cite{CicaleseLeonardi} to obtain a uniform graphicality scale $r$ on which: 
\begin{gather}
\nonumber	  \partial K \cap \mathbf{C}_{\nu}(x,r/2) = (\id + u \nu)(\mathbf{D}_{\nu}(x,r/2)), \\
\nonumber	  \partial E_\e \cap \mathbf{C}_{\nu}(x,r/2)  = (\id + u_{\e} \nu)({\mathbf{D}}_{\nu}(x,r/2)),\\
\label{eqn: convergence}	  \| u_\e -u\|_{L^\infty}  \to 0.
\end{gather}
and the estimates of \eqref{eqn: graph scale} hold uniformly in $\e$.
The key point here is the continuity of the cylindrical excess with respect to $L^1$ convergence of quasi-minimizers with uniform $r_0$ and $q$; since $\pa K$ is smooth, $E_\e$ will satisfy the flatness assumption \eqref{eqn: small excess} for every $x \in \pa E_\e$ and at a uniform scale in $\e$.

Finally, since
\[
\mu_\e = \frac{1}{n}\Big((n-1) \F(E_\e) + \e (2n-\alpha)\V(E_\e)\Big) \leq C(\F(K) + \V(K)),
\]
we see that the right-hand side of \eqref{eqn: c2beta} is bounded by a constant independent of $\e$.
By the Arzel\`a-Ascoli theorem, $\| u_\e -u\|_{C^{2,\beta'}} \to 0$ for all $\beta' \leq \beta.$ In particular, for $\e$ sufficiently small,  this yields (ii).  Since $K$ is uniformly convex, it follows that $\pa E_\e$ is uniformly convex as well provided $\e$ is sufficiently small.
\end{proof}

\begin{remark}[Higher regularity]
	{\rm Of course, starting from the $C^{1,\gamma}$ regularity coming from quasi-minimality, the regularity of $\pa^*E$ (and then $\pa E$ for small masses) can be improved much as the Euler-Lagrange equation will allow. Indeed, in place of Theorem~\ref{thm: regularity}(i), one can prove that the reduced boundary $\pa^*E$ is a $C^{2+k,\beta}$ hypersurface for all $\beta\in (0,1)$ and $k+\beta<n-\alpha$.
%  So, for instance, if $f$ is smooth and $\alpha<n-1$, a minimizer $E$ of \eqref{eqn: aniso drop} has $\pa^*E \in C^{3,\beta}$.
	To establish higher regularity, one differentiates \eqref{eqn: EL app} and applies the same Schauder estimates \eqref{eqn: c2beta} to derivatives of $u$, making use of the smoothness of $f$ and \eqref{eqn: Holder}.

	}
\end{remark}

\begin{remark}[Lower regularity assumptions on $f$]
	{\rm  For convenience we have assumed that $f \in C^{\infty}(\R^n\setminus \{0\})$ throughout the paper. Provided $f \in C^{2, \gamma}(\R^n\setminus \{0\})$, one can still establish Theorem~\ref{thm: regularity}(i) for $\beta<\min\{\gamma, n-\alpha\}$. The proof follows the one given above verbatim. In fact, to establish the $C^{1,\gamma}$ regularity of $\pa^*E$, we only need $f$ to be elliptic with $f \in C^{1,1}(\R^n\setminus\{0\})$, this follows from the results of \cite{FigalliJDG}.
	}
\end{remark}

\begin{remark}[Quantitative estimates]
	{\rm
	In the case $\alpha\in (0,n-1)$, one can adapt ideas from \cite{FigalliMaggiARMA} to make parts (ii) and (iii) of Theorem~\ref{thm: regularity}  quantitative in terms of the mass: there is a critical mass $m_6=m_6(n,f,\alpha, \beta_0)$ and a constant $C=C(n,f,\alpha, \beta_0)$ such that under the hypotheses of Theorem~\ref{thm: regularity} with $m\leq m_6,$ and setting $
F  = ( |K|/m)^{1/n} E,$
\[
\max_{\pa F} |\nabla^2 f(\nu_F) \na \nu_F - \id_{T_x\pa F}| \leq C m^{2\beta/(n+2\beta)}.
\]
Such quantitative estimates were shown in \cite[Theorem 2]{FigalliMaggiARMA} for a related class of problems; while our nonlocal repulsion term $\V(E)$ does not fall into the class of potential terms studied there, their proof extends to our setting for $\alpha\in(0,n-1)$ with only minor adjustments. The only notable difference comes in the study of the second variation, where one must bound an additional term in the second variation of $\V(E)$ that does not appear for the potentials studied in \cite{FigalliMaggiARMA}. In the case $\alpha \in [n-1,n)$, there are some subtle integrability issues for the second variation of $\V(E)$ and we do not know if the estimates can be made quantitative in this case.
}	
\end{remark}

%%%%%%%%%%%%%%%%%%%%%%%%%%%%%%%%%%%%%%%%%%%%%%%%%%%%%%%%%%%%%%%%%%
%%%%%%%%%%%%%%%%%%%%%%%%%%%%%%%%%%%%%%%%%%%%%%%%%%%%%%%%%%%%%%%%%%
\section{Auxiliary lemmas toward existence and diameter bound}\label{app: existence}
%%%%%%%%%%%%%%%%%%%%%%%%%%%%%%%%%%%%%%%%%%%%%%%%%%%%%%%%%%%%%%%%%%
%%%%%%%%%%%%%%%%%%%%%%%%%%%%%%%%%%%%%%%%%%%%%%%%%%%%%%%%%%%%%%%%%%

The following ``non-optimality criterion'' of \cite{KM14}, which follows by direct comparison and the Wulff inequality, is key in establishing both existence and the diameter bound.

\begin{lemma}\label{lem: nonopt}
	Fix $n\geq 2$, $\e>0$ and a surface tension $f$ with $\ell_f$ and $L_f$ given by \eqref{eqn:l and L}. There exists $\delta_0=\delta_0(n,\alpha, \ell_f, L_f)$ such that the following holds. Suppose $|F| =1$ and $\E_{\e,f}(F) \leq 2 \bar \E_{\e,f}$. If $F=F_1\cup F_2$ for two nonempty disjoint sets $F_1, F_2$ with 
	\begin{equation}\label{eqn: small gain}
\F(F_1) + \F(F_2) -\F(F) \leq  \F(F_2)/2
\end{equation}
and
\begin{equation}\label{eqn: small}
|F_2| \leq  \delta_0\, \min \{ 1, \e^{-n/(n+1-\alpha)}\},
\end{equation}
then $\E_{\e,f}(\hat{F}_1)<\E_{\e,f}(F)$, where $\hat{F}_1= rF_1$ such that $|\hat{F}_1|=1$.
\end{lemma}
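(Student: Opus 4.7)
The plan is a direct comparison argument: I would compute the energy gain from rescaling $F_1$ back to unit volume and show the perimeter savings guaranteed by \eqref{eqn: small gain} and the Wulff inequality beat the cost coming from the dilation, once $|F_2|$ is small enough in the scale dictated by $\bar{\mathcal{E}}_{\e,f}$.

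Set $m_2 = |F_2|$, so $|F_1| = 1-m_2$ and $r = (1-m_2)^{-1/n} \geq 1$. By the scaling identities
\begin{equation*}
\mathcal{F}(\hat F_1) = r^{n-1} \mathcal{F}(F_1), \qquad \mathcal{V}(\hat F_1) = r^{2n-\alpha}\mathcal{V}(F_1),
\end{equation*}
the hypothesis \eqref{eqn: small gain} (in the form $\mathcal{F}(F_1) \leq \mathcal{F}(F) - \mathcal{F}(F_2)/2$), and the monotonicity $\mathcal{V}(F_1)\leq\mathcal{V}(F)$ (since $F_1\subset F$), the inequality $\mathcal{E}_{\e,f}(\hat F_1) < \mathcal{E}_{\e,f}(F)$ is equivalent to showing
\begin{equation*}
(r^{n-1}-1)\mathcal{F}(F) + \e(r^{2n-\alpha}-1)\mathcal{V}(F) < \tfrac{r^{n-1}}{2}\mathcal{F}(F_2).
\end{equation*}

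Next I would estimate both sides. Restricting to $m_2 \leq 1/2$ (which is consistent with \eqref{eqn: small} once $\delta_0$ is chosen small), elementary expansions of $r^{n-1}$ and $r^{2n-\alpha}$ near $r=1$ give $r^{n-1}-1 \leq C(n)\,m_2$ and $r^{2n-\alpha}-1 \leq C(n,\alpha)\,m_2$. Combined with the assumption $\mathcal{E}_{\e,f}(F) \leq 2\bar{\mathcal{E}}_{\e,f}$, the left-hand side is bounded by $C(n,\alpha)\,m_2\,\bar{\mathcal{E}}_{\e,f}$. On the other side, the Wulff inequality \eqref{eqn: wulff inequality} applied to $F_2$ gives
\begin{equation*}
\tfrac{r^{n-1}}{2}\mathcal{F}(F_2) \geq \tfrac{n}{2}|K|^{1/n}\,m_2^{(n-1)/n} \geq c(n,\ell_f)\, m_2^{(n-1)/n}.
\end{equation*}
Thus it suffices to arrange $C(n,\alpha)\,m_2\,\bar{\mathcal{E}}_{\e,f} < c(n,\ell_f)\, m_2^{(n-1)/n}$, i.e., $m_2^{1/n}\, \bar{\mathcal{E}}_{\e,f} < c'(n,\alpha,\ell_f)$.

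The final step is to dispose of $\bar{\mathcal{E}}_{\e,f}$ using the two regimes already recorded in Section~\ref{sec: prelim}. When $\e \leq 1$, bound \eqref{eqn: energy bound 1} gives $\bar{\mathcal{E}}_{\e,f} \leq C(n,\alpha,\ell_f,L_f)$, so the criterion reduces to $m_2 \leq \delta_0$ for a suitable $\delta_0(n,\alpha,\ell_f,L_f)$. When $\e \geq 1$, the large-$\e$ bound \eqref{eqn: energy bound 2} gives $\bar{\mathcal{E}}_{\e,f} \leq C(n,\alpha,\ell_f,L_f)\,\e^{1/(n+1-\alpha)}$, and the criterion becomes $m_2 \leq \delta_0\,\e^{-n/(n+1-\alpha)}$. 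These two conditions are precisely \eqref{eqn: small}, which yields the conclusion after fixing $\delta_0$ small enough to absorb all constants (and in particular to guarantee $m_2 \leq 1/2$).

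The argument is essentially a bookkeeping computation; there is no serious obstacle beyond tracking the two regimes of $\e$ simultaneously, which is exactly what forces the $\min\{1,\e^{-n/(n+1-\alpha)}\}$ factor. The only place one must be careful is that the comparison $\mathcal{F}(F_1)\leq \mathcal{F}(F)-\mathcal{F}(F_2)/2$ (rather than merely $\mathcal{F}(F_1)\leq \mathcal{F}(F)$) is essential, since without it the perimeter savings would not dominate the dilation cost of $\mathcal{V}$.
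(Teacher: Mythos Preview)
Your argument is correct and follows essentially the same route as the paper: bound the dilation cost by $C\,m_2\,\bar{\mathcal E}_{\e,f}$, bound the perimeter savings below via the Wulff inequality by $c\,m_2^{(n-1)/n}$, and then invoke the two energy bounds \eqref{eqn: energy bound 1} and \eqref{eqn: energy bound 2} to absorb $\bar{\mathcal E}_{\e,f}$ in the two regimes of $\e$. The only slip is wording: after applying $\mathcal F(F_1)\le \mathcal F(F)-\mathcal F(F_2)/2$ and $\mathcal V(F_1)\le \mathcal V(F)$, the displayed inequality is \emph{sufficient} for $\mathcal E_{\e,f}(\hat F_1)<\mathcal E_{\e,f}(F)$, not equivalent to it---but this does not affect the argument.
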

\begin{proof}
Let $\delta=|F_2| \in (0,1)$, so that $\hat{F}=(1-\delta)^{1/n}F_1.$ Observe that \eqref{eqn: small gain} implies that $\E_{\e,f}(F_1) \leq \E_{\e,f}(F) - \F(F_2)/2$. Hence, by \eqref{eqn: small gain},
\begin{align*}
	\E_{\e,f}(\hat{F}) & \leq (1+\delta)^{(2n-\alpha)/n}\E_{\e,f}(F_1)\\
					& \leq (1+C(n,\alpha)\,\delta)\E_{\e,f}(F_1)\\
					& \leq (1+C(n,\alpha)\,\delta)\Big(\E_{\e, f}(F) - \F(F_2)/2\Big).
\end{align*}
So, thanks to the Wulff inequality \eqref{eqn: wulff inequality}, we find that $\E_{\e,f}(\hat{F})-\E_{\e, f}(F)$ is bounded above by 
\[
C(n,\alpha)\,\delta \bar \E_{\e, f} -\delta^{(n-1)/n}n|K|^{1/n}\, ,
\]
which, recalling \eqref{eqn: energy bound 1} and \eqref{eqn: energy bound 2}, is strictly negative provided $\delta<\delta_0$ for some $\delta_0>0$ sufficiently small.
%\begin{equation}\label{eqn: compare energies}
%\begin{split}
%	\E_{\e,f}(\hat{F}) - \E_{\e,f}(F) &\leq \E_{\e,f}(\hat{F}) - \E_{\e,f}(F_1) - \F(F_2)/2\\
%	& \leq ((1+\delta)^{(2n-\alpha)/n}-1)\E_{\e,f}(F_1) - \F(F_2)/2\\
%	& \leq((1+\delta)^{(2n-\alpha)/n}-1)\E_{\e,f}(K) - \delta^{(n-1)/n}n|K|^{1/n}/2,
%	\end{split}
%\end{equation}
%For $\delta\in (0,1)$ the convex function $(1+\delta)^{(2n-\alpha)/n}-1$ is bounded above by $C_{n,\alpha} \delta$, while  $\E_{\e,f}(K) \leq C(n,\alpha, L_f)$ by \eqref{eqn: energy bound 1}. Hence, \eqref{eqn: compare energies} becomes
%\[
%\E_{\e,f}(G) - \E_{\e,f}(F) \leq c_1 \delta - c_2 \delta^{(n-1)/n}
%\]
\end{proof}

\begin{lemma}\label{lem: non opt satisfied}
Fix $n\geq 2$ and a surface tension $f$ with $\ell_f$ and $L_f$ given by \eqref{eqn:l and L}. There exists $\e_1(n,\alpha, \ell_f, L_f) \leq 1$ and $\rho_0$ such that the following holds. Let $F$ be a set of finite perimeter with $|F|=1$ and $\E_{\e, f}(F)\leq \bar\E_{\e,f}+ \e$. Then, for some $\rho \in [L_f/(\ell_f\om_n^{1/n}),\rho_0)$ and after a translation, the sets 
\[
F_1 = F\cap B_\rho \qquad \text{ and } \qquad F_2 = F \setminus B_\rho
\]
satisfy \eqref{eqn: small gain} and \eqref{eqn: small}.
\end{lemma}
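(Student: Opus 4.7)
The plan is to combine a small-isoperimetric-deficit estimate with a coarea slicing argument: the near-minimality hypothesis will first localize the mass of $F$ in a ball of uniform radius, and the slicing step will then extract a good $\rho$ from within a fixed interval. I would begin by testing the dilation $K_r$ with $r=|K|^{-1/n}$ (so $|K_r|=1$) as a competitor in \eqref{eqn: aniso drop 2}, which, as in the proof of \eqref{eqn: energy bound 1}, gives $\bar\E_{\e,f}\leq n|K|^{1/n}+C\e$ with $C=C(n,\alpha,\ell_f,L_f)$. Combined with the Wulff inequality $\F(F)\geq n|K|^{1/n}$ and the hypothesis $\E_{\e,f}(F)\leq\bar\E_{\e,f}+\e$, this yields $n|K|^{1/n}\leq\F(F)\leq n|K|^{1/n}+C_1\e$ for $\e\leq 1$.

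Next, I would decompose $F$ into its (at most countably many) $\F$-indecomposable components $\{F^{(i)}\}$ with volumes $v_1\geq v_2\geq\cdots$ summing to $1$. Applying the componentwise Wulff inequality and the deficit bound above gives $\sum_i v_i^{(n-1)/n}\leq 1+C_2\e$. Using $v_i^{(n-1)/n}\geq v_i$ for $v_i\leq 1$ together with a Taylor expansion of $t\mapsto t^{(n-1)/n}$ at $t=1$, I would deduce $v_1\geq 1-C_3\e$ and $\sum_{i\geq 2}v_i\leq C_3\e$, so that the dominant component $F^{(1)}$ has anisotropic isoperimetric deficit at most $C_4\e$. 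The $L^1$-form of the quantitative anisotropic Wulff inequality then provides a translate $K_\sigma$ of the Wulff shape of volume $v_1$ with $|F^{(1)}\triangle K_\sigma|\leq C_5\sqrt{\e}$; since $K_\sigma\subset B_{L_f}$ (for $v_1\leq 1$) once its center is translated to the origin, setting $R_*:=\max\{L_f,\ L_f/(\ell_f\om_n^{1/n})\}$ and taking $\e_1\leq 1$ so small that $C_5\sqrt{\e_1}+C_3\e_1\leq\delta_0/2$ gives $|F\setminus B_{R_*}(0)|\leq\delta_0/2$, with $\delta_0$ as in Lemma \ref{lem: nonopt}.

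Finally I would execute a coarea slicing step. Set $\rho_0:=R_*+8L_f/|K|^{1/n}$ and $\psi(\rho):=|F\setminus B_\rho(0)|$, a nonincreasing function with $\psi(R_*)\leq\delta_0/2$. For a.e.\ $\rho$, the slicing formula for anisotropic perimeter along $\pa B_\rho$ gives
\[
\F(F\cap B_\rho)+\F(F\setminus B_\rho)-\F(F)\leq 2L_f\,\H^{n-1}(F\cap\pa B_\rho)=-2L_f\,\psi'(\rho).
\]
Combined with $\F(F\setminus B_\rho)\geq n|K|^{1/n}\psi(\rho)^{(n-1)/n}$ from Wulff, condition \eqref{eqn: small gain} is implied by $-h'(\rho)\leq|K|^{1/n}/(4L_f)$, where $h:=\psi^{1/n}\leq 1$; the set where this fails has Lebesgue measure at most $4L_f/|K|^{1/n}$, since $\int_0^\infty(-h')\,d\rho\leq 1$. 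The interval $[R_*,\rho_0]$ has length $8L_f/|K|^{1/n}$ and therefore contains some good $\rho$, for which \eqref{eqn: small gain} holds; monotonicity of $\psi$ then yields $\psi(\rho)\leq\delta_0/2\leq\delta_0\min\{1,\e^{-n/(n+1-\alpha)}\}$ for $\e\leq 1$, establishing \eqref{eqn: small}.

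The main obstacle is the concentration step, which must upgrade qualitative smallness of the anisotropic isoperimetric deficit into quantitative $L^1$-closeness of $F^{(1)}$ to a Wulff shape of uniform diameter: general indecomposable sets with bounded perimeter and unit volume need not have bounded diameter (e.g.\ dumbbell-type configurations), so the quantitative anisotropic Wulff inequality is essential. Its square-root error scaling forces the choice $\e_1=O(\delta_0^2)$.
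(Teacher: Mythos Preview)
Your argument is correct, but it is more elaborate than the paper's and introduces machinery that is not needed. The paper simply translates $F$ to minimize $|F\triangle K_s|$ (where $|K_s|=1$) and invokes the \emph{qualitative} stability of the Wulff inequality to conclude that $|F\setminus B_\rho|\leq |F\triangle K_s|$ is below $\delta_0$ once $\e$ is small; no component decomposition and no quantitative Wulff inequality are used. Your decomposition into $\F$-indecomposable pieces is an unnecessary detour: the sharp quantitative Wulff inequality applies directly to $F$ itself, and even that is more than is required, since qualitative stability already gives the (non-explicit) $\e_1$ the lemma asks for. For the slicing step, the paper integrates the differential inequality $U'\leq -c_1 U^{(n-1)/n}$ on the interval where \eqref{eqn: small gain} fails to bound $\bar\rho$ directly, whereas you rephrase this as a Chebyshev bound on the bad set of $h=\psi^{1/n}$; the two implementations are equivalent. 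What your approach buys is an explicit dependence $\e_1=O(\delta_0^2)$, at the cost of appealing to the sharp quantitative anisotropic isoperimetric inequality; the paper's route is shorter and self-contained but gives no rate.
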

\begin{proof}
Let $F$ be a set of finite perimeter with  $|F|=1$ and $\E_{\e,f}(F)\leq \bar \E_{\e,f}+ \e$. Set $s = |K|^{-1/n}$, so that $|K_s|=1$, and replace $F$ with a translation $F+x_0$ such that $|(F+x_0)\triangle K_s| =\inf_x|(F+x)\triangle K_s|$. Note that $K_s \subset  B_\rho $ provided $\rho\geq L_f/(\ell_f\om_n^{1/n})$. For all such $\rho$, set 
\[
F_1^\rho = F\cap B_\rho \qquad \text{ and } \qquad F_2^\rho = F \setminus B_\rho.
\]
We claim that there exists a constant $\rho_0=\rho_0(n,\ell_f, L_f)$ such that \eqref{eqn: small gain} and \eqref{eqn: small} are satisfied for some $\bar \rho\leq \rho_0$ provided $\e_1$ is sufficiently small. Let us first see that \eqref{eqn: small} is satisfied for every $\rho\geq L_f/(\ell_f\om_n^{1/n})$, provided $\e_1$ is small enough. Indeed,
note that 
\[
\F(F) - \F(K_s) \leq \e (\V(K_s) - \V(F)) + \e \leq C(n,f,\alpha)\, \e\,.
\]
Then, since $F_2^\rho \subset E\triangle K_s$ for every , we have  $|F_2^\rho| \leq |E\triangle K_s| \to 0$ as $\F(F) - \F(K_s) \to 0$. So, for any such $\rho$, we have that $|F_2^\rho|$ satisfies \eqref{eqn: small} for $\e_1$ sufficiently small.

%\footnote{In \cite{KM14} (and Ihsan's earlier version), they use the quantitative isoperimetric (Wulff) inequality to say
%\[
%|F\triangle K |^{1/2} \leq C_{n,f} (\F(F) - \F(K)) \leq C_{n,f}\e (\V(K) - \V(F)) \leq C_{n,f,\alpha} \e.
%\]
%Note that the sharp quantitative estimate, and in fact no explicit modulus of continuity, is actually needed here, all it does is make the constant explicitly computable. Since we have paid no attention to making constants explicit elsewhere, I'd say we should either (i) make it a selling point that some of our constants are explicitly computable (though not all) or (ii) not use such a powerful tool if it's not needed.  Let me know what you guys think about this point.
%
%}
Now, let $\bar \rho$ be the smallest constant greater than or equal to $L_f/(\ell_f\om_n^{1/n})$ such that \eqref{eqn: small gain} is satisfied. 
%We derive a differential inequality to show that $\bar R$ is bounded above by a constant $\bar R_0=\bar R_0(n,\ell_f, L_f)$. 
For a.e. $\rho>0$, we have
 \begin{equation}\label{eqn: splitting}
 \F(F_1^\rho) + \F(F_2^\rho) -\F(F) = \int_{\pa B_\rho \cap F} f(\nu_{B_\rho}) + f(-\nu_{B_\rho})\,d\H^{n-1} \leq 2L_f \H^{n-1}(\pa B_\rho\cap F)\,;
 \end{equation}
see \cite[Theorem 16.3 and Proposition 2.16]{Maggi2012}.
Thus, by the definition of $\bar \rho$, for a.e. $\rho \in [L_f, \bar \rho)$ we have 
 \[
 \H^{n-1}(\pa B_\rho\cap F) \geq \F(F_2^\rho)/ 4L_f.
 \]
 Define the function $U(\rho ) = |F\setminus B_\rho| = |F_2^\rho|$. Then for a.e. $\rho\in [L_f, \bar \rho)$,
 \[
 U'(\rho) = -\H^{n-1}(\pa B_\rho\cap F) \leq - \F(F_2^\rho)/ 4L_f \leq -c_1  |F_2^\rho|^{(n-1)/n} =- c_1 U(\rho)^{(n-1)/n}\, ,
 \]
 where $c_1:=n|K|^{1/n}/4L_f$.
Integrating from $L_f$ to $\bar \rho$ and noting that $U(L_f)\leq C(n,L_f)$, we find that $\bar \rho \leq \rho_0(n,\ell_f, L_f)$.
This concludes the proof.
\end{proof}

\bigskip

Let us now prove the diameter bound given in \eqref{eqn: diameter bound}.

\begin{proof}[Proof of \eqref{eqn: diameter bound}]
It is equivalent to show that a minimizer of \eqref{eqn: aniso drop 2} satisfies
\[
\diam E \leq C(n, \alpha, \ell_f, L_f) \e^{(n-1)/(n+1-\alpha)}.
\]
Thanks to Section~\ref{subsec: q min}, it suffices to show that for any $x\in E$, the lower density estimate of \eqref{eqn: density ests} holds for all $x\in E$  up to scale $\bar r\geq c_1\,\e^{-1/(n+1-\alpha)}$ with the constant $c_1$ as in the previous proof. 
%\footred{I: Please check that the constant is indeed $c_1$} 

Fix $x\in E$ and let $F_1^r= E\setminus B_r(x)$ and $F_2^r = E \cap B_r(x)$.
Let $\bar r$ be the smallest $r>0$ such that 
\begin{equation}\label{eqn: small gain 2}
\F(F_1^r) + \F(F_2^r) - \F(E) \leq \F(F_2^r)/2	\, .
\end{equation}
Note that $\bar r>\om_n^{-1/n}\delta_0 \e^{-1/(n+1-\alpha)}$, otherwise we may apply Lemma~\ref{lem: nonopt} to contradict the minimality of $E$. Note also that $F_1^{\bar r}$ is nonempty. Indeed, if not, then $E\subset B_{\bar r}(x)$ (up to a null set), and in particular $\bar r \geq \om_n^{-1/n}$. In this case, we have
\[
\E_{\e,f}(E) \geq \e \V(E) \geq \e 2^{-\alpha}\om_n^{\alpha/n},
\]
contradicting \eqref{eqn: energy bound 2}.

Now, \eqref{eqn: small gain 2} allows us to extend the usual proof of lower density estimates of \eqref{eqn: density ests} up to scale $\bar r.$
% showing that $E$ satisfies the lower density estimate
%\begin{equation}\label{eqn: density est 2}
%	\frac{|E\Delta B_r(x)|}{\om_n r^n} \geq c_0  \qquad \forall r\leq \bar r
%\end{equation}
%where $c_0=\ell_f^n/L_f^n 4^n$
Indeed, for a.e. $r>0$, we have
 \[
 \F(F_1^r) + \F(F_2^r) -\F(E) = \int_{\pa B_r \cap E} f(\nu_{B_r}) + f(-\nu_{B_r})\,d\H^{n-1} \leq 2L_f \H^{n-1}(\pa B_r\cap E)\,,
 \]
and for any $r<\bar r$, the left-hand side is bounded above by $\F(F_2^r)/2$, so
 \[
 \H^{n-1}(\pa B_r\cap E) \geq \F(F_2^r)/ 4L_f
 \]
 for a.e. $r \in (0, \bar r)$. Hence, setting $U(r) = |E\cap B_r| = |F_2^r|$, we have for a.e. $r\in (0, \bar r)$
 \[
 U'(r) = \H^{n-1}(\pa B_r\cap E) \geq \F(F_2^r)/ 4L_f \geq c_1  |F_2^r|^{(n-1)/n} = c_1 U(r)^{(n-1)/n}\, ,
 \]
 where $c_1=n|K|^{1/n}/4L_f\geq n\om_n^{1/n} \ell_f/4L_f$.
Integrating from $0$ to $r$ for any $r\leq \bar r$ completes the proof.
\end{proof}

\bibliographystyle{IEEEtranS}
\def\url#1{}
\bibliography{references2}

\end{document}